\numberwithin{equation}{section}
\newtheorem{theorem}{Theorem}[section]
\newtheorem{lemma}[theorem]{Lemma}
\newtheorem{definition}[theorem]{Definition}
\newtheorem{corollary}[theorem]{Corollary}
\theoremstyle{remark}
\newtheorem{remark}[theorem]{Remark}
\newcommand{\di}{\displaystyle}
\newcommand{\vanish}[1]{}
\begin{document}

\title[Hurwitzian continued fractions]{Hurwitzian continued fractions
  containing a repeated constant and an arithmetic progression}

\author[G\'abor Hetyei]{G\'abor Hetyei}

\address{Department of Mathematics and Statistics,
  UNC-Charlotte, Charlotte NC 28223-0001.
WWW: \tt http://www.math.uncc.edu/\~{}ghetyei/.}

\subjclass [2000]{Primary 05A10; Secondary 05A15, 11A55, 30B10, 33A40}

\keywords{Hurwitzian continued fraction, convergents, Bessel functions,
  Fibonacci polynomials, Lucas polynomials}

\date{\today}

\begin{abstract}
We prove an explicit formula for infinitely many convergents of 
Hurwitzian continued fractions that repeat several copies of the same
constant and elements of one arithmetic progression, in a quasi-periodic
fashion. The proof involves combinatorics and formal Laurent
series. Using very little analysis we can express their limits 
in terms of (modified) Bessel functions and Fibonacci polynomials.
The limit formula is a generalization of Lehmer's theorem that implies the
continuous fraction expansions of $e$ and $\tan(1)$, and it can also be
derived from Lehmer's work using Fibonacci 
polynomial identities. We completely characterize those implementations
of our limit formula for which the parameter of each Bessel function is
the half of an odd integer, allowing them to be replaced with elementary
functions.   
\end{abstract}

\maketitle

\section*{Introduction}

It is a remarkable property of infinite continued fractions that they often
define a sequence whose limit is easier to describe than the
individual entries. Even for $[1,1,\ldots]$, the
simplest example, the limit is easily found by solving a quadratic
equation, but it takes a bit longer to find a formula for the
convergents, in terms of Fibonacci numbers. 

The subclass of {\em Hurwitzian} (see Section~\ref{ss:hurwitz})
continued fractions of the form 
$$[\underbrace{\alpha,\ldots,\alpha}_r,\beta_0,
\overline{\underbrace{\alpha,\ldots,\alpha,\beta_0+\beta_1\cdot
    n}_d}]_{n=1}^\infty$$ 
that we propose investigating seems to be no different in this
regard. For the special case $d=1$ and $r=0$, D.\ H.\ Lehmer
\cite{DHLehmer} proved a formula for the limit in terms of Bessel
functions, that can be verified easily, after having guessed the
correct answer.  
Using another result from \cite{DHLehmer} and some
Fibonacci polynomial identities, it is not hard to generalize Lehmer's
formula to Hurwitzian continued fractions of the above form (see
Remark~\ref{rem:Lehmersway}). The resulting generalization has several
famous special cases, the most famous ones being Euler's formula for $e$
and the formula for $\tan(1)$. On the other hand, the only research
regarding the convergents themselves seems to be the
work~\cite{DNLehmer} of D.\ N.\ Lehmer (D.\ H.\ Lehmer's father!), who
proved congruences for their numerators and denominators by induction,
in somewhat greater generality, but without stating the values of the
convergents in an explicit fashion.  

Our work contains such an explicit formula, stated in
Section~\ref{sec:main} and proved in Section~\ref{sec:conv}. A variant
of the Euler-Mindig formulas using shifted partial denominators
yields a summation formula with many vanishing terms. This leads to a compact 
recurrence for the numerators of the $(nd+r-1)$st convergents that may
be restated as a linear differential equation for a formal Laurent
series. Classical textbooks on differential equations instruct us to
solve the associated homogeneous equation first and then find the
general solution by ``variation of parameters''. Unfortunately, in our
case the ``solution'' of the homogeneous equation turns out to be a a
two-way infinite formal sum. After discarding
infinitely many terms to have a formal Laurent series, the
``spirit'' of the classical method  still inspires a good guess for a
form of the solution, where the transformed differential equation
encodes a recurrence that is easily solved by inspection. Returning to
the original Laurent series involves using two polynomial summation
formulas that can be shown purely combinatorially, and seem to be
interesting by their own right. These formulas are shown in
Section~\ref{sec:lemma}. As outlined in Section~\ref{sec:main}, our
explicit formulas for the $(nd+r-1)$st convergents allow us to calculate the
limits using very little analysis. To find only the limits, this
approach is a bit more tedious than the one outlined in Lehmer's
work~\cite{DHLehmer}, but we gain a little more insight by also
obtaining asymptotic formulas for the numerators and the denominators of
the convergents.  

Section~\ref{sec:examples} is motivated by Komatsu's recent
remark~\cite{Komatsu-longp} stating that all known examples of
Hurwitzian continued fractions seem to have a short quasi-period and
involve (hyperbolic) trigonometric functions. For our class of
Hurwitzian continued fractions, the limit may be expressed in terms of
(modified) Bessel functions, which are only known to have an elementary form
when their order is the half of an odd integer, and then they are 
rational expressions of radical and (hyperbolic) trigonometric functions. 
We describe all such continued fractions in our class, and find that
their quasi-period $d$ can not be longer than 
$3$. This result illustrates some of the obstacles in the way to an
elementary limit formula for a Hurwitzian continued fraction with a
longer quasi-period: unless an almost miraculous simplification occurs
in an expression of Bessel functions of the ``wrong'' order, the sequence of
partial denominators must be complicated enough to be outside the class
studied in the present work. Finally, in Section~\ref{sec:examples2} we
provide an analogous description of all continued fractions in our
class for which the limit is expressed in terms of Bessel functions of
integer order. These Bessel functions are not known to be
elementary, but they are still widely studied. Calculating every third
convergent for a simple example in this class turns out to be
equivalent to calculating all convergents of a generalized
continued fraction. This ``coincidence'' seems worth a second look in
the future.  

Our work inspires several questions. Extending
the validity of our formulas to all convergents of the 
same class seems to require much more index management but only a few
more ideas. As indicated by Lehmer~\cite{DHLehmer}, the
calculation of the limits is easily extended to the class of
Hurwitzian fractions whose quasi-periodic part contains several
different constants and one arithmetic sequence. An explicit formula
for the convergents should be obtained using some multivariate
generalization of Fibonacci polynomials. The ideas used in our
calculations of the convergents may also be useful in finding
convergents of other Hurwitzian continued fractions. Finally, as indicated in
Sections~\ref{sec:lemma} and \ref{sec:conv}, there is combinatorics
behind the formulas for the convergents. This combinatorics would be
worth uncovering. A good starting point may be revisiting the
weighted lattice-path model proposed in Flajolet's work~\cite{Flajolet},
where convergents of generalized continued fractions arise as weights of
infinite lattice paths of limited height.

\section{Preliminaries}

\subsection{Continued fractions} A good reference on the
basic facts of the subject is Perron's classic work
\cite{Perron-v1,Perron-v2}. A {\em 
  generalized finite  
  continued fraction} is an expression of the form 
\begin{equation}
\label{eq:gcf}
a_0+\cfrac{b_1}{a_1+\cfrac{b_2}{a_2+\ddots\cfrac{b_{n-1}}{a_{n-1}+\cfrac{b_n}{a_n}}}},
\end{equation}
where the {\em initial term $a_0$}, the {\em partial denominators}
$a_1,\ldots,a_n$ and the {\em partial numerators} $b_1,\ldots,b_n$ may
be numbers or functions. An {\em infinite continued fraction} is 
obtained by letting $n$ go to infinity. The arising questions of
convergence have a reassuring answer for {\em continued
  fractions} where all $b_i$ equal $1$, the initial term $a_0$ is an
integer, and the partial denominators $a_i$ (for $i>0$) are positive
integers. Many sources refer to generalized continued fractions of the
form (\ref{eq:gcf}) as continued fractions and use the term
``simple continued fraction'' where we use ``continued fraction''.
All continued fractions in the present work are ``simple'' by default,
and we will use the distinguishing adjective ``generalized'' at the rare
occasion when it is needed. Every rational number may be written uniquely
as a finite continued fraction, subject to the restriction that the last
partial denominator is at least 
$2$ \cite[\S 9, Satz 2.1]{Perron-v1}. Every infinite 
continued fraction converges to an irrational number and every
irrational number may be uniquely written as a (necessarily infinite)
continued fraction \cite[\S 12, Satz 2.6]{Perron-v1}. As usual,
for a finite, respectively infinite, continued fraction we will use the
shorthand notations $[a_0,a_1,\ldots,a_n]$ and $[a_0,a_1,\ldots]$,
respectively. An infinite continued fraction
$[a_0,a_1,\ldots]$ is the limit of its {\em convergents}, that is, of
the finite continued fractions $[a_0,\ldots,a_n]$, obtained by
reading the first $n$ partial denominators. The convergents 
\begin{equation}
[a_0,\ldots,a_n]=\frac{p_n}{q_n}
\end{equation}
may be recursively computed from the initial conditions $p_{-1}=1$,
$p_0=a_0$, $q_{-1}=0$ and $q_0=1$, and from the recurrences
$p_n=a_n p_{n-1}+p_{n-2}$ and $q_n=a_nq_{n-1}+q_{n-2}$ for $n\geq 1$,
cf.\ \cite[\S2, Eq.\ (12), (13)]{Perron-v1}. The integers $p_n$ and
$q_n$ are relative prime for all $n$ \cite[\S 9, Satz
  2.1]{Perron-v1}. The {\em Euler-Mindig formulas},
derived for generalized continued fractions in \cite[\S 3]{Perron-v1}, allow
to express the numerators $p_n$ and the denominators $q_n$ directly. 
\begin{definition}
We call a set $S$ of integers {\em even} if it is the
disjoint union of intervals of even cardinality. 
Given two sets of integers $S$ and $T$ such that $S\subseteq T$, we 
say that {\em $T$ evenly contains $S$}, denoted by $S\subseteq_e T$ or
$T\supseteq_e S$, if $T\setminus S$ is an even set. 
\end{definition}
Thus, for example $\{5\}\subseteq_e \{1,\ldots,7\}$ since the difference
$\{1,2,3,4,6,7\}$ is the disjoint union of $\{1,\ldots,4\}$ and
$\{6,7\}$, whereas $\{4\}\not\subseteq_e \{1,\ldots, 4\}$ as $\{1,2,3\}$
can not be written as the disjoint union of intervals of even
cardinality. The Euler-Mindig formulas for continued fractions may be
restated as   
\begin{equation}
\label{eq:EM}
p_n=\sum_{S\subseteq_e \{0,\ldots,n\}} \prod_{i\in S} a_i 
\quad
\mbox{and}
\quad
q_n=\sum_{S\subseteq_e \{1,\ldots,n\}} \prod_{i\in S} a_i. 
\end{equation} 
The following observation is an immediate consequence of
Eq.\ (\ref{eq:EM}).
\begin{lemma}
\label{lem:shift}
The denominator $q_n$ associated to $[a_0,a_1,\ldots]$ is the same as
the numerator $p_{n-1}$ associated to $[a_1,a_2,\ldots]$.
\end{lemma}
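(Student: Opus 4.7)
The plan is to deduce the lemma directly from the Euler–Mindig formulas (\ref{eq:EM}), which express numerators and denominators as sums of products over ``evenly contained'' subsets. No recurrence manipulation or induction should be necessary: the statement is literally a relabeling of indices in the sum formula for $q_n$.

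Concretely, I would first apply the second equation of (\ref{eq:EM}) to the sequence $a_0,a_1,\ldots$ to write
\[
q_n = \sum_{S\subseteq_e \{1,\ldots,n\}} \prod_{i\in S} a_i.
\]
Then I would set $a'_j := a_{j+1}$, so that the continued fraction $[a_1,a_2,\ldots]$ is $[a'_0,a'_1,\ldots]$, and apply the first equation of (\ref{eq:EM}) with upper index $n-1$ to obtain
\[
p'_{n-1} = \sum_{S'\subseteq_e \{0,\ldots,n-1\}} \prod_{j\in S'} a'_j
 = \sum_{S'\subseteq_e \{0,\ldots,n-1\}} \prod_{j\in S'} a_{j+1}.
\]
The last step is to reindex the inner sum by $i := j+1$, which sends the set $S'\subseteq_e \{0,\ldots,n-1\}$ to the set $S := \{j+1 : j\in S'\}\subseteq_e \{1,\ldots,n\}$; note that evenness of a set is preserved by a translation, because translations preserve disjoint unions of intervals and their cardinalities. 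After this bijection the two sums coincide term by term, so $p'_{n-1}=q_n$, which is the claim.

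The only step that requires any care is verifying that the relation $\subseteq_e$ is invariant under the translation $j\mapsto j+1$; this is immediate from the definition, since the partition of the complement into intervals of even cardinality translates along with the ambient set. There is no substantive obstacle here: once (\ref{eq:EM}) is in hand, the lemma is a one-line index shift, and its role in the sequel is presumably to allow reducing questions about denominators to already-proven statements about numerators of a shifted sequence.
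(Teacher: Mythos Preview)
Your proof is correct and follows exactly the approach indicated in the paper, which simply states that the lemma is an immediate consequence of the Euler--Mindig formulas (\ref{eq:EM}). Your explicit index shift $i=j+1$ together with the observation that translation preserves the relation $\subseteq_e$ is precisely what makes that consequence ``immediate.''
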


\subsection{Hurwitzian continued fractions}
\label{ss:hurwitz}

A {\em Hurwitzian continued fraction} is a continued
fraction of the form
$[a_0,\ldots,a_h,\overline{\phi_0(\lambda),\ldots,\phi_{k-1}(\lambda)}]_{\lambda=0}^{\infty}$,
where $\phi_0, \ldots, \phi_{k-1}$ are polynomial functions that send
positive integers into positive integers. The
definition given in \cite[\S 32]{Perron-v1} is easily seen to be
equivalent. In the most trivial examples of a Hurwitzian continued
fraction all functions $\phi_j$ are constants, we then
obtain a {\em periodic continued fraction}. The set of real numbers
represented by a periodic continued fraction is exactly the set of
quadratic irrationals \cite[\S  20]{Perron-v1}. It should be noted that
quadratic irrationals are excluded from the definition of
Hurwitzian continued fractions in some recent
papers~\cite{Komatsu-longp}, by requiring that 
at least one of the repeatedly used polynomial functions be
non-constant. A real number is {\em Hurwitzian} if its 
continued fraction representation is a Hurwitzian continued
fraction. A famous Hurwitzian number is 
$$e=[2,1,2,1,1,4,1,1,6,1,1,8,1,1,10,\ldots],$$
see~\cite[\S34, Eq.\ (10)]{Perron-v1}. 
The main result on Hurwitzian numbers is Hurwitz theorem~\cite[\S
  33]{Perron-v1} stating that for a Hurwitzian number $\xi_0\in {\mathbb
  R}$, and any rational numbers $a,b,c,d$, satisfying $ad-bc\neq 0$, the
number $(a\xi_0+b)/(b\xi_0+d)$ is also a Hurwitzian number. Hurwitz theorem
provides also some estimate on the degrees of the polynomial functions
appearing in the continued fraction representation of
$(a\xi_0+b)/(b\xi_0+d)$, remains silent however on the issue how the
length of the period may be affected by the fractional linear 
transformation $\xi_0\mapsto (a\xi_0+b)/(b\xi_0+d)$. A generalization of
Hurwitz theorem may be found in~\cite{Stambul}.   

Hurwitzian numbers may be computed from their continued fraction
representation in some special cases, the expansion of the set of
examples is subject of ongoing research. A frequently overlooked first
attempt may be found in Perron's book~\cite[\S 48,
  Satz 6.3]{Perron-v2} which states (without proof) a formula for
generalized continued fractions of the form 
(\ref{eq:gcf}) having the property that the partial numerators $b_i$ are
all equal, and that the numbers $a_i$ form an arithmetic sequence. For
continued fractions, Perron's result gives
\begin{equation}
\label{eq:perron}
[\beta_0,\beta_0+\beta_1,\beta_0+2\beta_1,\beta_0+3\beta_1,\ldots]=\beta_1\frac{\sum\limits_{n=0}^{\infty} \frac{1}{\beta_1^{2n} n!
  \Gamma\left(\frac{\beta_0}{\beta_1}+n\right)}}
  {\sum\limits_{n=0}^{\infty} \frac{1}{\beta_1^{2n} n! 
  \Gamma\left(\frac{\beta_0}{\beta_1}+n+1\right)}}. 
\end{equation}
The same class of Hurwitzian continued fractions was revisited by
D.\ H.\ Lehmer \cite{DHLehmer}, who proved the following formula. 
\begin{equation}
\label{eq:lehmer}
[\beta_0,\beta_0+\beta_1,\beta_0+2\beta_1,\beta_0+3\beta_1,\ldots]
=I_{\beta_0/\beta_1-1}(2\beta_1^{-1})/I_{\beta_0/\beta_1}(2\beta_1^{-1})
\end{equation}
where $I_{\nu}(z)$ is the modified Bessel function of the first kind
\begin{equation}
\label{eq:mBessel}
I_{\nu}(z)
=\sum_{m=0}^{\infty} \frac{(z/2)^{\nu+2m}}{\Gamma(m+1)\Gamma(\nu+m+1)}. 
\end{equation}
D.\ H.\ Lehmer's result was generalized in recent papers to other Hurwitzian
continued fractions
\cite{Komatsu-HT,Komatsu-longp,McLaughlin}. Some analogous results were found
for {\em Tasoev continued fractions}
\cite{Komatsu-HT,Komatsu-RR,McLaughlin,McLaughlin-Wyshinski}, defined as
continued fractions of the form  
$[a_0;\overline{\underbrace{a^k,\ldots,a^k}_{m}}]_{k=1}^{\infty}$. 
As pointed out by T.\ Komatsu~\cite{Komatsu-longp}, most of these recent
results involve Hurwitzian and Tasoev continued fractions where the
length of the quasi-period does not exceed $3$, Komatsu's
work~\cite{Komatsu-longp} contains some sophisticated examples of longer
quasi-periods. 

\subsection{Bessel functions}
\label{sec:Bessel}

Besides the modified Bessel functions $I_{\nu}(x)$, our formulas
will also involve the (original) Bessel functions of the first kind 
\begin{equation}
\label{eq:Bessel}
J_{\nu}(z)
=\sum_{m=0}^{\infty} \frac{(-1)^m (z/2)^{\nu+2m}}{\Gamma(m+1)\Gamma(\nu+m+1)}. 
\end{equation}
There is an elementary expression for the functions $I_{\nu}(z)$ and
$J_{\nu}(z)$ respectively, whenever $\nu$ is the half of an odd
integer. Indeed, using $\Gamma(1/2)=\sqrt{\pi}$ and
$\Gamma(z+1)=z\Gamma(z)$, it is easy to derive directly from
the definitions (\ref{eq:mBessel}) and (\ref{eq:Bessel}) that we have  
\begin{equation}
\label{eq:I1/2}
I_{-1/2}(z)=\sqrt{\frac{2}{\pi z}}\cosh(z),\quad 
I_{1/2}(z)=\sqrt{\frac{2}{\pi z}}\sinh (z), 
\end{equation}
\begin{equation}
\label{eq:J1/2}
J_{-1/2}(z)=\sqrt{\frac{2}{\pi z}}\cos(z), 
\quad\mbox{and}\quad
J_{1/2}(z)=\sqrt{\frac{2}{\pi z}}\sin (z). 
\end{equation}
(see \cite[List of formul\ae: 44, 48, 182,
  186]{McLachlan}). The functions 
$I_{\nu}(z)$ and $J_{\nu}(z)$ satisfy very similar recurrence formulas:
$$
I_{\nu+1}(x)=I_{\nu-1}(x)-\frac{2\nu}{x} I_{\nu}(x) 
\quad\mbox{and}\quad
J_{\nu+1}(x)=-J_{\nu-1}(x)+\frac{2\nu}{x} J_{\nu}(x), 
$$
see \cite[9.1.27, 9.6.26]{Abramowitz-Stegun}. These allow us to find explicit
elementary expressions for  $I_{\nu}(z)$ and $J_{\nu}(z)$, whenever
$\nu$ is the half of an odd integer. In particular, for $\nu=3/2$, we obtain
\begin{equation}
\label{eq:IJ3/2}
I_{3/2}(x)=\sqrt{\frac{2}{\pi x}}\left(\cosh(x)-\frac{\sinh
  (x)}{x}\right)
\quad\mbox{and}\quad
J_{3/2}(x)
=\sqrt{\frac{2}{\pi x}}\left(\frac{\sin(x)}{x}-\cos(x)\right),
\end{equation}
see \cite[List of formul\ae: 45, 183]{McLachlan}.

\subsection{Fibonacci and Lucas polynomials}

The Fibonacci and Lucas polynomials are $q$-analogues of the usual
Fibonacci and Lucas numbers. 

\begin{definition}
\label{def:qfib}
We define the {\em Fibonacci polynomials $F_n(q)$} and the {\em Lucas
  polynomials $L_n(q)$} by the initial conditions $F_0(q)=0$,
$F_1(q)=1$, $L_0(q)=2$ and $L_1(q)=q$; and by the common recurrence 
$X_n(q)=q\cdot X_{n-1}(q)+X_{n-2}(q)$ for $n\geq 2$, where the letter
$X$ should be replaced by either $F$ or $L$ throughout the defining recurrence.
\end{definition}
Fibonacci and Lucas polynomials are widely studied, they even have their
own Mathworld and Wikipedia entries. As a sample reference on Fibonacci
polynomials see Yuang and Zhang~\cite{Yuang-Zhang}, for
further generalizations, see Cigler~\cite{Cigler}.

The coefficients of the Fibonacci and Lucas polynomials 
are listed as sequences A102426 and A034807 in the Online
Encyclopedia of Integer Sequences~\cite{OEIS}. In a recent work of Foata and
Han~\cite{Foata-Han} the  coefficient of $q^{n+1-m}$ in
$F_{n+1}(q)$ appears as the number of {\em   $t$-compositions} of $n$ into $m$
parts. Here we record another combinatorial interpretation. The
coefficient of $q^{n+1-m}$ in $F_{n+1}(q)$ is the number of $m$-element
even sets contained in $\{1,\ldots,n\}$:   
\begin{equation}
\label{eq:Fq}
F_{n+1}(q)=\sum_{\emptyset\subseteq_e S\subseteq \{1,\ldots,n\}} q^{n-|S|}
=\sum_{S\subseteq_e \{1,\ldots,n\}} q^{|S|}.
\end{equation}
Eq.\ (\ref{eq:Fq}) may be shown by induction on $n$, using the
defining recurrence. Just like for the usual Fibonacci and Lucas
numbers, a closed formula for $F_n(q)$ and $L_n(q)$ may be obtained after
solving the characteristic equation. We have
\begin{eqnarray}
\label{eq:Fnqc}
F_n(q)&=&\frac{1}{\sqrt{q^2+4}}\left(\rho_1^{n}-\rho_2^{n}\right)\quad\mbox{and}\\
\label{eq:Lq}
L_n(q)&=&\rho_1^{n}+\rho_2^{n}
\end{eqnarray}
where 
\begin{equation}
\label{eq:rhos}
\rho_1=\frac{q+\sqrt{q^2+4}}{2}\quad\mbox{and}\quad
\rho_2=\frac{q-\sqrt{q^2+4}}{2}.
\end{equation}

\section{Our main result}
\label{sec:main}

All our results will be about Hurwitzian continued fractions of the form    
$$
\xi(\alpha,\beta_0,\beta_1,d,r):=
[\underbrace{\alpha,\ldots,\alpha}_r,\beta_0,
\overline{\underbrace{\alpha,\ldots,\alpha,\beta_0+\beta_1\cdot
    n}_d}]_{n=1}^\infty
$$
where $\alpha$, $\beta_0$, $\beta_1$ and $d$ are positive integers and 
and $r$ is a nonnegative integer. 
In order to state them we need to introduce two {\em magic numbers}
associated to such a continued fraction.
\begin{definition}
\label{def:magic}
The {\em magic sum} associated to $\xi(\alpha,\beta_0,\beta_1,d,r)$
is the sum
$$
\sigma(\alpha,\beta_0,\beta_1,d,r)
=
\frac{\beta_0-\alpha}{\beta_1}+\frac{L_d(\alpha)}{\beta_1 F_{d}(\alpha)}, 
$$
the {\em magic quotient} associated to
$\xi(\alpha,\beta_0,\beta_1,d,r)$
is the quotient
$$
\rho(\alpha,\beta_0,\beta_1,d,r)=\frac{(-1)^{d-1} 
    }{\beta_1^2\cdot F_{d}(\alpha)^2}.
$$
\end{definition}
Whenever this does not lead to confusion, we will omit the parameters 
$(\alpha,\beta_0,\beta_1,d,r)$ and denote the magic numbers simply by
$\sigma$ and $\rho$, respectively. Note that $\sigma$ does not depend on
$r$ and that $\rho$ depends only on $\alpha$, $\beta_1$ and $d$. Our
main result is the following.
\begin{theorem}
\label{thm:hurwitzc}
The $(nd+r-1)$st convergent of $\xi(\alpha,\beta_0,\beta_1,d,r)$ may be
written as $p_{nd+r-1}/q_{nd+r-1}$ where $p_{nd+r-1}$ is given by 
$$
\begin{array}{rcl}
\di \frac{p_{nd+r-1}}{F_{d}(\alpha)^n\beta_1^n}
&=&
\di F_{r+1}(\alpha)
\sum_{k=0}^{\lfloor n/2\rfloor}
\frac{(n-k)!}{k!}\binom{n+\sigma-1-k}{n-2k} \rho^k
+(-1)^{d-r}F_{d-r-1}(\alpha)\cdot 
\\
&&\di F_{d}(\alpha)\beta_1 
\sum_{k=0}^{\lfloor (n-1)/2 \rfloor}
\frac{(n-k-1)!}{k!}\binom{n+\sigma-1-k}{n-2k-1}\rho^{k+1},\\
\end{array}
$$
and $q_{nd+r-1}$ is given by
$$
\begin{array}{rcl}
\di \frac{q_{nd+r-1}}{F_{d}(\alpha)^n\beta_1^n}
&=&
\di F_{r}(\alpha)
\sum_{k=0}^{\lfloor n/2\rfloor}
\frac{(n-k)!}{k!}\binom{n+\sigma-1-k}{n-2k} \rho^k
+(-1)^{d+1-r}F_{d-r}(\alpha)\cdot 
\\
&&\di F_{d}(\alpha)\beta_1 
\sum_{k=0}^{\lfloor (n-1)/2 \rfloor}
\frac{(n-k-1)!}{k!}\binom{n+\sigma-1-k}{n-2k-1}\rho^{k+1}.\\
\end{array}
$$
\end{theorem}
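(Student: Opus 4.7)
My proof plan has three stages. First, I would reduce the denominator formula to the numerator formula via Lemma~\ref{lem:shift}: the denominators $q_{nd+r-1}$ of $\xi(\alpha,\beta_0,\beta_1,d,r)$ coincide with the numerators $p_{nd+(r-1)-1}$ of the shifted continued fraction $\xi(\alpha,\beta_0,\beta_1,d,r-1)$ for $r\geq 1$, and substituting $r-1$ for $r$ in the $p$-formula yields exactly the claimed prefactors $F_r(\alpha)$ and $(-1)^{d+1-r}F_{d-r}(\alpha)$, with the sign shifting as $d-(r-1)\mapsto d+1-r$. The edge case $r=0$ is handled directly: $F_0(\alpha)=0$ collapses the first Fibonacci prefactor in the $q$-formula, and the surviving single sum can be checked against the basic convergent recursion.

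Second, I would derive a compact second-order recurrence for the rescaled sequence $\tilde P_n:=p_{nd+r-1}/(F_d(\alpha)^n\beta_1^n)$. Iterating the convergent recurrence over one quasi-period produces the transfer matrix
$$M_n = \begin{pmatrix}F_d(\alpha) & F_{d-1}(\alpha)\\ F_{d-1}(\alpha) & F_{d-2}(\alpha)\end{pmatrix}\begin{pmatrix}\beta_0+(n-1)\beta_1 & 1\\ 1 & 0\end{pmatrix},$$
with $\det M_n=(-1)^d$ (Cassini identity for Fibonacci polynomials) and $\operatorname{tr} M_n=\beta_1 F_d(\alpha)\,(n+\sigma-1)$, after invoking $L_d(\alpha)=\alpha F_d(\alpha)+2F_{d-1}(\alpha)$ together with the definition of $\sigma$. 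Since the off-diagonal entry $F_d(\alpha)$ of $M_n$ is constant in $n$, eliminating the auxiliary sequence $p_{nd+r-2}$ between the two rows of $v_n=M_nv_{n-1}$ yields the clean recurrence
$$\tilde P_n=(n+\sigma-1)\,\tilde P_{n-1}+\rho\,\tilde P_{n-2}.$$
This is equivalent to the ``Euler--Mindig variant with shifted partial denominators'' flagged in the introduction: writing each arithmetic $\beta_0+k\beta_1$ as $\alpha+((\beta_0-\alpha)+k\beta_1)$ inside \eqref{eq:EM} and exploiting the evenness constraint forces the same cancellation.

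Third, I would convert the recurrence into a first-order linear ODE for the generating function $f(x):=\sum_n\tilde P_n x^n$. The factor $n+\sigma-1$ contributes a derivative, giving $x^2f'(x)=(1-\sigma x-\rho x^2)f(x)$, whose formal solution is $Cx^{-\sigma}\exp(-1/x)\exp(-\rho x)$. Since $\exp(-1/x)$ is only a two-sided formal expression, I would follow the \emph{spirit} of variation of parameters: take a one-sided truncation of the homogeneous ``solution'' as template $h(x)$, make the ansatz $f(x)=g(x)h(x)$ for an unknown Laurent series $g(x)$, and solve the resulting coefficient recurrence on $g$ by inspection. Multiplying the expansions of $g$ and $h$ and collecting $[x^n]$ produces a double sum over Fibonacci polynomials, binomials and factorials; identifying this sum with the combination $F_{r+1}(\alpha)\sum_k\tfrac{(n-k)!}{k!}\binom{n+\sigma-1-k}{n-2k}\rho^k+\cdots$ asserted in the theorem is where the two polynomial summation identities of Section~\ref{sec:lemma} enter. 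The initial conditions $\tilde P_0=F_{r+1}(\alpha)$ (from Euler--Mindig applied to the all-$\alpha$ prefix) and $\tilde P_1$ (one more step of the recurrence) pin down all constants.

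The principal obstacle I anticipate is the third stage: managing the divergent two-sided homogeneous solution and guessing the correct Laurent-series ansatz that makes the variation-of-parameters step produce a recurrence on $g$ solvable by inspection. The matrix derivation of the second stage is essentially routine once $L_d(\alpha)$ is spotted inside $\operatorname{tr} M_n$, and the final coefficient extraction is mechanical given the identities of Section~\ref{sec:lemma}; it is the interface between the formal exponential $\exp(-1/x)$ and a genuine Laurent series representation of $f$ where the real creativity of the proof lies.
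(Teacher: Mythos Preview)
Your overall architecture matches the paper's closely: the reduction of $q_{nd+r-1}$ to $p_{nd+r-1}$ via Lemma~\ref{lem:shift} (with the $r=0$ case handled separately, exactly as in Lemma~\ref{lem:r=0}), the passage to a first-order linear ODE for a generating series, the ``variation of parameters with a truncated exponential'' trick, and the final identification via the two identities of Section~\ref{sec:lemma} are all what the paper does in Section~\ref{sec:conv}.

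Where you genuinely diverge is in Stage~2. The paper does \emph{not} derive your three-term recurrence $\tilde P_n=(n+\sigma-1)\tilde P_{n-1}+\rho\tilde P_{n-2}$; instead it expands the Euler--Mindig formula~\eqref{eq:EM} with each $a_i$ written as $\alpha+(a_i-\alpha)$, obtaining the convolution recurrence~\eqref{eq:prec}, and then encodes that as the inhomogeneous ODE~\eqref{eq:ydiff} for $y(t)=\sum_n p_{nd+r-1}t^{\beta_0+\beta_1 n-\alpha}$. Your transfer-matrix route is cleaner: the magic sum and magic quotient drop out immediately as $\operatorname{tr} M_n=\beta_1 F_d(\alpha)(n+\sigma-1)$ and $\det M_n=(-1)^d$, whereas in the paper $\sigma$ only crystallizes after summing the generating function~\eqref{eq:Fndr} and rearranging. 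Your approach also exhibits the two sums in the theorem as the two fundamental solutions $A_n,B_n$ of a single three-term recurrence, with the Fibonacci prefactors $F_{r+1}(\alpha)$ and $(-1)^{d-r}F_{d-r-1}(\alpha)F_d(\alpha)\beta_1$ fixed by the initial data; the paper's route instead carries $F_{r+1}(\alpha)$ and $F_{d-r-1}(\alpha)$ along as explicit inhomogeneous terms from the very first step.

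One point to tighten: the ODE you wrote, $x^2f'(x)=(1-\sigma x-\rho x^2)f(x)$, is only the homogeneous part. Your three-term recurrence is valid for $n\ge 2$, and translating it honestly to generating functions leaves an inhomogeneous contribution $-\tilde P_0-(\tilde P_1-\sigma\tilde P_0)x$ on the right-hand side; without it the only formal power-series solution is zero, and variation of parameters has nothing to act on. You clearly intend the initial conditions to enter (you say they ``pin down all constants''), but they enter precisely here, as the forcing term that drives the variation-of-parameters step. Computing $\tilde P_1$ from $v_1=M_1v_0$ and matching it to the $n=1$ case of the claimed formula requires the d'Ocagne-type identity $F_{d-1}(\alpha)F_{r+1}(\alpha)-F_d(\alpha)F_r(\alpha)=(-1)^rF_{d-r-1}(\alpha)$, which you should record explicitly.
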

We postpone the proof of Theorem~\ref{thm:hurwitzc} till
Section~\ref{sec:conv}. In this section we only show how to find
an asymptotic formula for the numerator and the denominator, which
yields essentially the same limit formula that can also be derived from
Lehmer's work~\cite{DHLehmer}. This is the only part of our paper where
the notion of limits and convergence from analysis will be used, the
proof of Theorem~\ref{thm:hurwitzc} will only involve purely algebraic
and combinatorial manipulations. We begin with estimating the magic sum
$\sigma$.

\begin{lemma}
The magic sum $\sigma$ is always positive.
\end{lemma}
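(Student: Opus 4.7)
The plan is to rewrite the magic sum in a manifestly positive form by combining the two fractions and invoking a single Fibonacci--Lucas identity. From Definition~\ref{def:magic} we have
$$\sigma = \frac{1}{\beta_1}\left(\beta_0 - \alpha + \frac{L_d(\alpha)}{F_d(\alpha)}\right),$$
so it is enough to show $L_d(\alpha)/F_d(\alpha) \geq \alpha$, because then $\sigma \geq \beta_0/\beta_1 > 0$.

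The cleanest route is through the polynomial identity
$$L_d(q) - q\,F_d(q) \;=\; 2\,F_{d-1}(q), \qquad d\geq 1,$$
which I would establish in one of two short ways. By induction on $d$, both sides satisfy the common recurrence $X_d = q X_{d-1} + X_{d-2}$, so it suffices to check two base cases: $L_1 - q F_1 = q - q = 0 = 2F_0(q)$ and $L_2 - qF_2 = (q^2+2) - q^2 = 2 = 2F_1(q)$. Alternatively, substituting the Binet-style closed forms (\ref{eq:Fnqc}) and (\ref{eq:Lq}) and using $\rho_1\rho_2 = -1$, which is read off from (\ref{eq:rhos}), one finds that the left side collapses to $-2\rho_1\rho_2(\rho_1^{d-1}-\rho_2^{d-1})/(\rho_1-\rho_2) = 2F_{d-1}(q)$ after a short calculation.

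Dividing the identity by $F_d(\alpha)$, which is a positive integer for $\alpha \geq 1$ and $d\geq 1$, yields $L_d(\alpha)/F_d(\alpha) = \alpha + 2F_{d-1}(\alpha)/F_d(\alpha)$. Substituting back gives
$$\sigma = \frac{\beta_0}{\beta_1} + \frac{2\,F_{d-1}(\alpha)}{\beta_1\,F_d(\alpha)},$$
in which the first summand is strictly positive because $\beta_0,\beta_1 \geq 1$, while the second is non-negative because $F_{d-1}(\alpha) \geq 0$ and $F_d(\alpha) > 0$. There is no real obstacle here; the only conceptual content beyond bookkeeping is spotting the identity $L_d - qF_d = 2F_{d-1}$, a $q$-analogue of the classical relation between Lucas and Fibonacci numbers.
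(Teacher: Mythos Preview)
Your argument is correct and follows essentially the same route as the paper: both reduce positivity of $\sigma$ to the inequality $L_d(\alpha)\geq \alpha F_d(\alpha)$, which the paper dispatches by a bare induction on $d$ while you make the exact identity $L_d(q)-qF_d(q)=2F_{d-1}(q)$ explicit. Your version is a mild refinement rather than a different approach, and the resulting closed form $\sigma=\beta_0/\beta_1+2F_{d-1}(\alpha)/(\beta_1 F_d(\alpha))$ is a nice bonus.
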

\begin{proof}
Since $\beta_0$ and $\beta_1$ are positive, it suffices to prove that
$L_d(\alpha) \geq \alpha F_{d}(\alpha)$ holds for all positive integer
$\alpha$ and all nonnegative integer $d$. This may be shown by induction
on $d$, using the common recurrence of the Fibonacci  and Lucas polynomials.
\end{proof}
\begin{corollary}
\label{cor:spos}
The falling factorial 
$(\sigma+n-1)_n=(\sigma+n-1)\cdots (\sigma)$ is always positive. 
\end{corollary}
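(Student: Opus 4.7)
The plan is essentially immediate from the preceding lemma. The falling factorial $(\sigma+n-1)_n$ is by definition the product $\prod_{k=0}^{n-1}(\sigma+k)$, so it suffices to argue that each factor is positive. Since the previous lemma gives $\sigma>0$, and $k\geq 0$ for each index in the product, each factor $\sigma+k$ satisfies $\sigma+k\geq \sigma>0$, and a product of positive real numbers is positive. The case $n=0$ is handled separately by the usual convention that the empty product equals $1$, which is also positive.

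The only ``step'' worth singling out is making sure the convention for the falling factorial symbol $(\sigma+n-1)_n$ used in the statement is unambiguous: the indicated expansion $(\sigma+n-1)(\sigma+n-2)\cdots(\sigma)$ displays exactly $n$ consecutive factors decreasing from $\sigma+n-1$ down to $\sigma$, so no factor can be zero or negative under the hypothesis $\sigma>0$ provided by the lemma. There is no obstacle beyond this unpacking of the notation.
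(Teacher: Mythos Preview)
Your proof is correct and matches the paper's approach exactly: the paper states this as an immediate corollary of the preceding lemma ($\sigma>0$) without writing out any argument, and your unpacking of the falling factorial into a product of factors $\sigma+k>0$ is precisely the intended one-line justification.
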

As a consequence, we may divide $p_{nd+r-1}/(F_{d}(\alpha)^n\beta_1^n)$,
as well as $q_{nd+r-1}/(F_{d}(\alpha)^n\beta_1^n)$ by $(\sigma+n-1)_n$ and
ask whether these quotients converge as $n$ goes to infinity. It will
turn out that they do.

\begin{lemma}
\label{lem:rlimit}
Let $\sigma$ be any positive real number and $\rho$ be any real
number. Then
$$
\lim_{n\rightarrow \infty} \frac{1}{(\sigma+n-1)_n} 
\sum_{k=0}^{\lfloor n/2\rfloor}
\frac{(n-k)!}{k!}\binom{n+\sigma-1-k}{n-2k} \rho^k
=
\sum_{m=0}^{\infty} \frac{\rho^m}{m!(\sigma+m-1)_m}.
$$ 
\end{lemma}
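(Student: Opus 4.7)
The plan is to apply a dominated convergence argument for series (Tannery's theorem) in order to interchange the limit in $n$ with the summation in $k$. The key preparatory step is an algebraic rewriting that simultaneously exposes the pointwise limit of each summand and a dominating bound.

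I would first expand the generalized binomial coefficient $\binom{n+\sigma-1-k}{n-2k}$ as the product $(\sigma+k)(\sigma+k+1)\cdots(\sigma+n-1-k)$ divided by $(n-2k)!$, and then cancel the common factors against $(\sigma+n-1)_n=\sigma(\sigma+1)\cdots(\sigma+n-1)$. After this cancellation the $k$-th summand divided by $(\sigma+n-1)_n$ takes the form
$$
\frac{\rho^k}{k!\,(\sigma+k-1)_k}\cdot
\frac{(n-k)(n-k-1)\cdots(n-2k+1)}{(\sigma+n-1)(\sigma+n-2)\cdots(\sigma+n-k)},
$$
in which the second factor is a quotient of $k$ terms over $k$ terms, each of order $n$. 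For fixed $k$ this second factor tends to $1$ as $n\to\infty$, so the pointwise limit of the $k$-th summand is $\rho^k/(k!\,(\sigma+k-1)_k)$, which is exactly the $k$-th term of the target series.

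The dominating bound is equally transparent. Pairing corresponding factors shows $n-k-i+1\le \sigma+n-i$ for $i=1,\ldots,k$ whenever $\sigma>0$, so the second factor above is bounded by $1$ uniformly in $n\ge 2k$; extending each summand by zero for $n<2k$, we obtain the uniform bound $M_k\df |\rho|^k/(k!\,(\sigma+k-1)_k)$. Since $(\sigma+k-1)_k=\Gamma(\sigma+k)/\Gamma(\sigma)$ grows like $k!$ up to a polynomial factor for large $k$, the series $\sum_k M_k$ converges absolutely, and Tannery's theorem completes the argument.

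The only mildly delicate step is the algebraic bookkeeping in the rewriting --- identifying exactly which factors of $(\sigma+n-1)_n$ cancel against the numerator of the binomial coefficient. Once the summand is in the displayed form, both the termwise limit and the uniform bound are forced by the single hypothesis $\sigma>0$, and no further input from analysis is required.
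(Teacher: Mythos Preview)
Your argument is correct and is essentially the same as the paper's proof: you arrive at the identical factorization
\[
\frac{\rho^k}{k!\,(\sigma+k-1)_k}\cdot\frac{(n-k)\cdots(n-2k+1)}{(\sigma+n-1)\cdots(\sigma+n-k)}
\]
(the second factor is the paper's $\delta_{n,k}$), observe that this factor lies in $[0,1]$ and tends to $1$ termwise, and then pass to the limit. The only cosmetic difference is that you invoke Tannery's theorem by name, whereas the paper writes out the corresponding $\varepsilon/3$ argument explicitly; the content is the same.
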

\begin{proof}
Note first that the series on the right hand side is absolute convergent
for any $\rho$ as each of its term has smaller absolute value than the
corresponding term of $e^{\rho/\sigma}=\sum_{m=0}^{\infty} \rho^m
\sigma^{-m}/m!$, an absolute convergent series. For any $\varepsilon >0$
there is an $M$ such that 
\begin{equation}
\label{eq:e1}
\sum_{m=M+1}^{\infty} \left|\frac{\rho^m}{m!(\sigma+m-1)_m}\right|
<\frac{\varepsilon}{3}. 
\end{equation}
Let us consider now the expression of $n$ on the left hand side. It may
be rewritten as 
$$
\sum_{k=0}^{\lfloor n/2\rfloor} \frac{\rho^k}{k!(\sigma+k-1)_k} \cdot
\delta_{n,k} \quad\mbox{where}\quad
\delta_{n,k}=\frac{(n-k)_k}{(\sigma+n-1)_k}.$$
The factor $\delta_{n,k}$ may be estimated as follows:
$$
\left(\frac{n-2k+1}{\sigma+n-k}\right)^k\leq 
\delta_{n,k}=\frac{(n-k)\cdots (n-2k+1)}{(\sigma+n-1)\cdots (\sigma+n-k)}
\leq \left(\frac{n-k}{\sigma+n-1}\right)^k.
$$
Our upper bound for $\delta_{n,k}$ is $1$ for $k=0$ and it is less than
$1$ for $k>0$, since $n+\sigma-1>n-1>n-k$. For fixed $k$, our lower
bound converges to $1$ as $n$ goes to infinity. Now, for all $n>M$ we
may write 
$$
\sum_{k=0}^{\lfloor n/2\rfloor} \frac{\rho^k}{k!(\sigma+k-1)_k} \cdot
\delta_{n,k}
=
\sum_{k=0}^{M} \frac{\rho^k}{k!(\sigma+k-1)_k} \cdot
\delta_{n,k}+
\sum_{k=M+1}^{\lfloor n/2\rfloor} \frac{\rho^k}{k!(\sigma+k-1)_k} \cdot
\delta_{n,k}
$$
By our choice of $M$, and by $|\delta_{n,k}|\leq 1$, the second sum on
the right hand side has absolute value less than $\varepsilon/3$, for
all $n>M$. As $n$ goes to infinity, each of $\delta_{n,0}$, \ldots,
$\delta_{n,M}$ converges to $1$, so there is an $n_0\geq M$ such that for all
$n>n_0$ we have 
$$
\left|\sum_{k=0}^{M} \frac{\rho^k}{k!(\sigma+k-1)_k} \cdot
\delta_{n,k}- \sum_{k=0}^{M} \frac{\rho^k}{k!(\sigma+k-1)_k}
\right|<\frac{\varepsilon}{3}, 
$$  
implying
\begin{equation}
\label{eq:e2}
\left|\sum_{k=0}^{\lfloor n/2\rfloor} \frac{\rho^k}{k!(\sigma+k-1)_k} \cdot
\delta_{n,k}- \sum_{k=0}^{M} \frac{\rho^k}{k!(\sigma+k-1)_k}
\right|<\frac{2 \varepsilon}{3}, 
\end{equation}
Combining (\ref{eq:e1}) and (\ref{eq:e2}), we obtain
$$
\left|\sum_{k=0}^{\lfloor n/2\rfloor} \frac{\rho^k}{k!(\sigma+k-1)_k} \cdot
\delta_{n,k}-\sum_{k=0}^{\infty} \frac{\rho^k}{k!(\sigma+k-1)_k}\right|
< \varepsilon \quad\mbox{for all $n\geq n_0$.}
$$
\end{proof}
\begin{lemma}
\label{lem:slimit}
Let $\sigma$ be any positive real number and $\rho$ be any real
number. Then
$$
\lim_{n\rightarrow \infty} \frac{1}{(\sigma+n-1)_n} 
\sum_{k=0}^{\lfloor (n-1)/2\rfloor}
\frac{(n-k-1)!}{k!}\binom{n+\sigma-1-k}{n-2k-1} \rho^{k+1}
=
\sum_{m=0}^{\infty} \frac{\rho^{m+1}}{m!(\sigma+m)_{m+1}}
$$ 
\end{lemma}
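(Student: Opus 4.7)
The plan is to proceed exactly as in the proof of Lemma~\ref{lem:rlimit}, with only mild adjustments to the index algebra. First I would rewrite the $k$-th summand of the left-hand side so as to isolate the target coefficient $\rho^{k+1}/(k!(\sigma+k)_{k+1})$ times a correction factor $\tilde\delta_{n,k}$ tending to~$1$. Expanding $\binom{n+\sigma-1-k}{n-2k-1}$ and cancelling $(n-2k-1)!$ against $(n-k-1)!$ leaves $(n-k-1)_k$ on top and the central product $(\sigma+k+1)(\sigma+k+2)\cdots(\sigma+n-1-k)$ also on top. After division by $(\sigma+n-1)_n = \sigma(\sigma+1)\cdots(\sigma+n-1)$, this central product cancels the middle portion of the falling factorial, leaving only the two ``ends'' $(\sigma+k)_{k+1}$ (the smallest $k+1$ factors) and $(\sigma+n-1)_k$ (the largest $k$ factors). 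The net identity is
\begin{equation*}
\frac{1}{(\sigma+n-1)_n}\cdot\frac{(n-k-1)!}{k!}\binom{n+\sigma-1-k}{n-2k-1}\rho^{k+1}
=\frac{\rho^{k+1}}{k!(\sigma+k)_{k+1}}\cdot\tilde\delta_{n,k},
\end{equation*}
with $\tilde\delta_{n,k}=(n-k-1)_k/(\sigma+n-1)_k$.

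Next I would verify that the target series converges absolutely by the bound $(\sigma+m)_{m+1}\geq \sigma^{m+1}$ (each factor is at least $\sigma>0$), which dominates it termwise by the convergent series $(|\rho|/\sigma)\,e^{|\rho|/\sigma}$. A factor-by-factor estimate of $\tilde\delta_{n,k}$ analogous to the one used for $\delta_{n,k}$ in the previous proof gives
\begin{equation*}
\left(\frac{n-2k}{\sigma+n-k}\right)^{k}\leq\tilde\delta_{n,k}\leq\left(\frac{n-k-1}{\sigma+n-1}\right)^{k}\leq 1
\end{equation*}
for all $n\geq 2k+1$, so $|\tilde\delta_{n,k}|\leq 1$ uniformly, and $\tilde\delta_{n,k}\to 1$ as $n\to\infty$ for each fixed $k$.

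With these two ingredients the $\varepsilon/3$-splitting from the proof of Lemma~\ref{lem:rlimit} transfers without essential change: pick $M$ so that the tail of the target series beyond index $M$ has absolute value less than $\varepsilon/3$; use $|\tilde\delta_{n,k}|\leq 1$ to dominate the tail $k>M$ of the $n$-th partial sum by the same tail of the target series, hence by $\varepsilon/3$; and finally pick $n_0\geq M$ so that for $n\geq n_0$ each of $\tilde\delta_{n,0},\ldots,\tilde\delta_{n,M}$ is within the required tolerance of $1$, making the finite head differ from $\sum_{k=0}^{M}\rho^{k+1}/(k!(\sigma+k)_{k+1})$ by less than $\varepsilon/3$. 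Adding the three estimates yields the claimed limit.

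No step looks genuinely subtle; the only place care is needed is in the initial rewrite, where the falling factorial in the target coefficient switches from $(\sigma+k-1)_k$ (as in Lemma~\ref{lem:rlimit}) to $(\sigma+k)_{k+1}$, reflecting the ``shift by one'' induced by the extra factor of $\rho$ and by the change from $\binom{n+\sigma-1-k}{n-2k}$ to $\binom{n+\sigma-1-k}{n-2k-1}$. Once this identity is in hand, the analytic part of the argument is a verbatim copy of the preceding proof.
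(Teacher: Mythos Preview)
Your argument is correct and is exactly the approach the paper takes: rewrite each summand as $\dfrac{\rho^{k+1}}{k!(\sigma+k)_{k+1}}\cdot\delta'_{n,k}$ with $\delta'_{n,k}=(n-k-1)_k/(\sigma+n-1)_k$, check $|\delta'_{n,k}|\leq 1$ and $\delta'_{n,k}\to 1$ for fixed $k$, and repeat the $\varepsilon/3$ splitting from Lemma~\ref{lem:rlimit}. Your write-up is in fact more detailed than the paper's, which merely records the form of $\delta'_{n,k}$ and says the rest is analogous.
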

\begin{proof}
The proof is very similar to the proof of
Lemma~\ref{lem:rlimit}, thus we omit the details. We only note that this
time we have 
$$
\frac{1}{(\sigma+n-1)_n} 
\sum_{k=0}^{\lfloor (n-1)/2\rfloor}
\frac{(n-k-1)!}{k!}\binom{n+\sigma-1-k}{n-2k-1} \rho^{k+1}
=
\sum_{k=0}^{\lfloor (n-1)/2\rfloor}
\frac{\rho^{k+1}}{k! (\sigma+k)_{k+1}} \cdot \delta'_{n,k}
$$
where
$$
\delta'_{n,k}=\frac{(n-k-1)_k}{(\sigma+n-1)_k}.
$$
Again $|\delta'_{n,k}|\leq 1$ for all $n$ and $k$ and, for any fixed
$k$, the limit of $\delta'_{n,k}$ is $1$ as $n$ goes to infinity.
\end{proof}
Combining Lemmas~\ref{lem:rlimit} and \ref{lem:slimit} with
Theorem~\ref{thm:hurwitzc}, we obtain 
\begin{equation}
\label{eq:plim}
\begin{array}{rcl}
\di \lim_{n\rightarrow\infty} \frac{p_{nd+r-1}}{F_{d}(\alpha)^n\beta_1^n
  (\sigma+n-1)_n} 
&=&
\di F_{r+1}(\alpha)
\sum_{m=0}^{\infty} \frac{\rho^m}{m!(\sigma+m-1)_m}\\
&&\di +(-1)^{d-r}F_{d-r-1}(\alpha) F_d(\alpha)\beta_1
\sum_{m=0}^{\infty} \frac{\rho^{m+1}}{m!(\sigma+m)_{m+1}}\\
\end{array}
\end{equation}
and 
\begin{equation}
\label{eq:qlim}
\begin{array}{rcl}
\di \lim_{n\rightarrow\infty} \frac{q_{nd+r-1}}{F_{d}(\alpha)^n\beta_1^n
  (\sigma+n-1)_n} 
&=&
\di F_{r}(\alpha)
\sum_{m=0}^{\infty} \frac{\rho^m}{m!(\sigma+m-1)_m}\\
&&\di +(-1)^{d-r+1}F_{d-r}(\alpha) F_d(\alpha)\beta_1
\sum_{m=0}^{\infty} \frac{\rho^{m+1}}{m!(\sigma+m)_{m+1}}.\\
\end{array}
\end{equation}
Taking the quotients of the right hand sides of (\ref{eq:plim}) and
(\ref{eq:qlim}), we obtain the following formula for
$\xi=\xi(\alpha,\beta_0,\beta_1,d,r)$: 
\begin{equation}
\label{eq:thetruth}
\xi
=
\frac{\di F_{r+1}(\alpha)
\sum_{m=0}^{\infty} \frac{\rho^m}{m!(\sigma+m-1)_m}
+(-1)^{d-r}F_{d-r-1}(\alpha) F_d(\alpha)\beta_1
\sum_{m=0}^{\infty} \frac{\rho^{m+1}}{m!(\sigma+m)_{m+1}}}
{\di F_{r}(\alpha)
\sum_{m=0}^{\infty} \frac{\rho^m}{m!(\sigma+m-1)_m}
+(-1)^{d-r+1}F_{d-r}(\alpha) F_d(\alpha)\beta_1
\sum_{m=0}^{\infty} \frac{\rho^{m+1}}{m!(\sigma+m)_{m+1}}}.
\end{equation}
This is the formula that could have been discovered in a ``parallel
universe'' where interest in Hurwitzian continued fractions were to
arise a long time before defining Bessel functions.   
We may restate Eq.\ (\ref{eq:thetruth}) in terms of Bessel functions
using the following two obvious statements.
\begin{lemma}
\label{lem:mBessel}
For any positive $\rho$ and $\sigma$ we have 
$$
\sum_{m=0}^{\infty} \frac{\rho^m}{m!(\sigma+m-1)_m}
=\frac{I_{\sigma-1}(2\sqrt{\rho})\Gamma(\sigma)}
{\rho^{(\sigma-1)/2}}
\quad
\mbox{and}
\quad
\sum_{m=0}^{\infty} \frac{\rho^{m+1}}{m!(\sigma+m)_{m+1}}
=\frac{I_{\sigma}(2\sqrt{\rho})\Gamma(\sigma)}{\rho^{(\sigma-2)/2}}.
$$
Here $I_{\nu}(z)$ is the modified Bessel function defined in
(\ref{eq:mBessel}).  
\end{lemma}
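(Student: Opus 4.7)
The plan is to reduce both identities to the defining power series \eqref{eq:mBessel} of $I_\nu(z)$ by rewriting the falling factorials in the denominators as ratios of Gamma functions. The key algebraic observation is that $(\sigma+m-1)_m=(\sigma+m-1)(\sigma+m-2)\cdots\sigma=\Gamma(\sigma+m)/\Gamma(\sigma)$, and similarly $(\sigma+m)_{m+1}=\Gamma(\sigma+m+1)/\Gamma(\sigma)$, so that in each case an overall factor of $\Gamma(\sigma)$ can be pulled out of the sum.

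First I would attack the left identity. Using $(\sigma+m-1)_m=\Gamma(\sigma+m)/\Gamma(\sigma)$, the left hand side becomes
$$
\sum_{m=0}^{\infty}\frac{\rho^m}{m!\,(\sigma+m-1)_m}
=\Gamma(\sigma)\sum_{m=0}^{\infty}\frac{\rho^m}{m!\,\Gamma(\sigma+m)}.
$$
Setting $\nu=\sigma-1$ and $z=2\sqrt{\rho}$ in \eqref{eq:mBessel} gives
$$
I_{\sigma-1}(2\sqrt{\rho})
=\sum_{m=0}^{\infty}\frac{\rho^{(\sigma-1)/2+m}}{m!\,\Gamma(\sigma+m)}
=\rho^{(\sigma-1)/2}\sum_{m=0}^{\infty}\frac{\rho^m}{m!\,\Gamma(\sigma+m)},
$$
so dividing by $\rho^{(\sigma-1)/2}$ and multiplying by $\Gamma(\sigma)$ yields exactly the claimed first identity.

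For the second identity I would proceed identically. Using $(\sigma+m)_{m+1}=\Gamma(\sigma+m+1)/\Gamma(\sigma)$, the left hand side becomes
$$
\Gamma(\sigma)\sum_{m=0}^{\infty}\frac{\rho^{m+1}}{m!\,\Gamma(\sigma+m+1)}.
$$
Setting $\nu=\sigma$ and $z=2\sqrt{\rho}$ in \eqref{eq:mBessel} gives
$$
I_{\sigma}(2\sqrt{\rho})=\rho^{\sigma/2}\sum_{m=0}^{\infty}\frac{\rho^m}{m!\,\Gamma(\sigma+m+1)},
$$
so multiplying by $\rho$ to shift the exponent and then dividing by $\rho^{\sigma/2}$ gives the sum on the right hand side equal to $I_{\sigma}(2\sqrt{\rho})/\rho^{(\sigma-2)/2}$. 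Restoring the factor $\Gamma(\sigma)$ produces the claimed second identity.

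There is no genuine obstacle here: the proof is purely a translation between the falling factorial notation and the Gamma function notation, and the positivity hypothesis on $\rho$ and $\sigma$ is used only to make the factor $\rho^{(\sigma-1)/2}$ (or $\rho^{(\sigma-2)/2}$) and the division by $\Gamma(\sigma)$ unambiguous. In fact the two series on the left converge for every complex $\rho$, and both identities remain valid on the Riemann surface of $\sqrt{\rho}$ provided one fixes a branch consistently, which explains why the author labels the statement as obvious.
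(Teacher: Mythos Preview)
Your proof is correct and is exactly the intended verification; the paper itself offers no argument for this lemma, introducing it merely as one of ``two obvious statements'' obtained by matching the series on the left with the defining series \eqref{eq:mBessel}. There is nothing to compare against beyond this, and your Gamma-function rewriting of the falling factorials is the natural way to make the match explicit.
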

\begin{lemma}
\label{lem:Bessel}
For any negative $\rho$ and positive $\sigma$ we have 
$$
\sum_{m=0}^{\infty} \frac{\rho^m}{m!(\sigma+m-1)_m}
=\frac{J_{\sigma-1}(2\sqrt{-\rho})\Gamma(\sigma)}
{(-\rho)^{(\sigma-1)/2}}
\quad
\mbox{and}
$$
$$
\sum_{m=0}^{\infty} \frac{\rho^{m+1}}{m!(\sigma+m)_{m+1}}
=-\frac{J_{\sigma}(2\sqrt{-\rho})\Gamma(\sigma)}{(-\rho)^{(\sigma-2)/2}}. 
$$
Here $J_{\nu}(z)$ is the Bessel function given by (\ref{eq:Bessel}).
\end{lemma}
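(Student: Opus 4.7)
The proof is a direct imitation of the argument for Lemma~\ref{lem:mBessel}, with signs absorbed by writing $\rho^m=(-1)^m(-\rho)^m$. The plan is to rewrite the falling factorials in the denominators as ratios of Gamma functions, set $z:=2\sqrt{-\rho}$, and then match coefficients term by term with the series definition (\ref{eq:Bessel}) of $J_\nu$.

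First I would use the identities
$(\sigma+m-1)_m=\Gamma(\sigma+m)/\Gamma(\sigma)$ and $(\sigma+m)_{m+1}=\Gamma(\sigma+m+1)/\Gamma(\sigma)$,
which hold because $\sigma>0$ and $\Gamma(x+1)=x\Gamma(x)$. Substituting these into the two sums gives
\begin{equation*}
\sum_{m=0}^{\infty} \frac{\rho^m}{m!(\sigma+m-1)_m}
=\Gamma(\sigma)\sum_{m=0}^{\infty}\frac{\rho^m}{\Gamma(m+1)\Gamma(\sigma+m)},
\end{equation*}
and similarly, after pulling one factor of $\rho$ out of the second sum,
\begin{equation*}
\sum_{m=0}^{\infty} \frac{\rho^{m+1}}{m!(\sigma+m)_{m+1}}
=\rho\,\Gamma(\sigma)\sum_{m=0}^{\infty}\frac{\rho^m}{\Gamma(m+1)\Gamma(\sigma+m+1)}.
\end{equation*}

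Next I would set $z:=2\sqrt{-\rho}$, so that $-\rho=(z/2)^2$ and $\rho^m=(-1)^m(z/2)^{2m}$. With this substitution the two inner sums become
$$
\sum_{m=0}^{\infty}\frac{(-1)^m(z/2)^{2m}}{\Gamma(m+1)\Gamma(\sigma+m)}
=\frac{J_{\sigma-1}(z)}{(z/2)^{\sigma-1}}
=\frac{J_{\sigma-1}(2\sqrt{-\rho})}{(-\rho)^{(\sigma-1)/2}},
$$
and analogously
$$
\sum_{m=0}^{\infty}\frac{(-1)^m(z/2)^{2m}}{\Gamma(m+1)\Gamma(\sigma+m+1)}
=\frac{J_{\sigma}(z)}{(z/2)^{\sigma}}
=\frac{J_{\sigma}(2\sqrt{-\rho})}{(-\rho)^{\sigma/2}},
$$
where I have extracted the factor $(z/2)^{\sigma-1}$ (resp.\ $(z/2)^\sigma$) needed to recover exactly the series on the right-hand side of (\ref{eq:Bessel}).

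Substituting back yields the first claimed identity immediately. For the second identity, one multiplies by $\rho\Gamma(\sigma)$ and uses $\rho=-(-\rho)$, giving
$$
\rho\,\Gamma(\sigma)\cdot\frac{J_{\sigma}(2\sqrt{-\rho})}{(-\rho)^{\sigma/2}}
=-\,\Gamma(\sigma)\cdot\frac{(-\rho)\,J_{\sigma}(2\sqrt{-\rho})}{(-\rho)^{\sigma/2}}
=-\frac{J_{\sigma}(2\sqrt{-\rho})\Gamma(\sigma)}{(-\rho)^{(\sigma-2)/2}},
$$
which is the stated formula. There is no real obstacle here: the whole proof is bookkeeping of Gamma-function identities and a sign substitution, and in fact it is precisely the argument of Lemma~\ref{lem:mBessel} carried out with $\rho<0$. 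Given that the authors wrote ``The proof is very similar\ldots, thus we omit the details'' in a nearby lemma, I would simply state the computation above and note that the positivity of $\sigma$ is used only to ensure that $\Gamma(\sigma+m)$ and $\Gamma(\sigma+m+1)$ are nonzero for all $m\ge 0$.
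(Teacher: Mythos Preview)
Your proof is correct and is exactly the routine verification the paper has in mind: the authors do not give a proof of Lemma~\ref{lem:Bessel} at all, introducing it together with Lemma~\ref{lem:mBessel} as ``two obvious statements,'' so your direct term-by-term comparison with the defining series~(\ref{eq:Bessel}) is precisely what is intended. One minor remark: the line you quote about ``we omit the details'' actually refers to Lemma~\ref{lem:slimit}, not to Lemma~\ref{lem:mBessel}, but this does not affect your argument.
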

Using Lemmas~\ref{lem:Bessel} and \ref{lem:mBessel} above we may
rephrase Eq.\ (\ref{eq:thetruth}) as follows. 
\begin{theorem}
\label{thm:hurwitzn}
Let $\alpha$, $\beta_0$, $\beta_1$  and $d$ be positive integers, and
let $r$ be nonnegative integers. Then the Hurwitzian continued
fraction   
$$
\xi(\alpha,\beta_0,\beta_1,d,r)=
[\underbrace{\alpha,\ldots,\alpha}_r,\beta_0,
\overline{\underbrace{\alpha,\ldots,\alpha,\beta_0+\beta_1\cdot
    n}_d}]_{n=1}^\infty
$$
is given by 
$$
\xi(\alpha,\beta_0,\beta_1,d,r)=
\frac{\di F_{r+1}(\alpha)
I_{\sigma-1}(2\sqrt{\rho})
+(-1)^{r+1} F_{d-r-1}(\alpha) I_{\sigma}(2\sqrt{\rho})}
{\di F_{r}(\alpha) I_{\sigma-1}(2\sqrt{\rho})
+(-1)^{r} F_{d-r}(\alpha) I_{\sigma}(2\sqrt{\rho})}
$$
if $d$ is odd, and it is given by 
$$
\xi(\alpha,\beta_0,\beta_1,d,r)
=
\frac{\di F_{r+1}(\alpha)
J_{\sigma-1}(2\sqrt{-\rho})
+(-1)^{r+1}F_{d-r-1}(\alpha) 
J_{\sigma}(2\sqrt{-\rho})}
{\di F_{r}(\alpha)
J_{\sigma-1}(2\sqrt{-\rho})
+(-1)^{r}F_{d-r}(\alpha) 
J_{\sigma}(2\sqrt{-\rho})},
$$
if $d$ is even. Here $I_{\nu}(z)$ and  $J_{\nu}(z)$, respectively, denotes the 
is the modified respectively original Bessel function  defined in
(\ref{eq:mBessel}) and (\ref{eq:Bessel}), respectively. 
\end{theorem}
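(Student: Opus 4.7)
The proof is essentially an algebraic consolidation: we start from the limit formula~(\ref{eq:thetruth}) already derived, and feed it through Lemmas~\ref{lem:mBessel} and \ref{lem:Bessel}. The first step is to observe that the definition $\rho=(-1)^{d-1}/(\beta_1^2 F_d(\alpha)^2)$ means that $\rho>0$ when $d$ is odd and $\rho<0$ when $d$ is even, so exactly one of the two Bessel lemmas applies to each case.

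In both cases I would begin by factoring the common quantity $\Gamma(\sigma)/|\rho|^{(\sigma-1)/2}$ out of the numerator and denominator of (\ref{eq:thetruth}). Under Lemma~\ref{lem:mBessel} the two sums reduce to $I_{\sigma-1}(2\sqrt\rho)$ and $\sqrt\rho\,I_\sigma(2\sqrt\rho)$ (up to the shared factor); under Lemma~\ref{lem:Bessel} the two sums reduce to $J_{\sigma-1}(2\sqrt{-\rho})$ and $-\sqrt{-\rho}\,J_\sigma(2\sqrt{-\rho})$. The key numerical observation is that $\beta_1 F_d(\alpha)\sqrt{|\rho|}=1$, so wherever the product $F_d(\alpha)\beta_1$ appears multiplying the ``second sum'', it combines with the leftover $\sqrt{|\rho|}$ from the lemma to give exactly $1$. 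After this cancellation the only $d$-dependent sign left to track is that of $(-1)^{d-r}$ in the cross terms.

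For odd $d$ the substitution is direct: $(-1)^{d-r}=(-1)^{r+1}$ and $(-1)^{d-r+1}=(-1)^r$, and no additional sign is introduced by the modified Bessel lemma, so the numerator and denominator collapse to exactly the expressions claimed with $I_{\sigma-1}(2\sqrt\rho)$ and $I_\sigma(2\sqrt\rho)$. For even $d$ the extra minus sign on the second identity in Lemma~\ref{lem:Bessel} combines with $(-1)^{d-r}=(-1)^r$ to produce $(-1)^{r+1}$ in the numerator, and analogously $(-1)^r$ in the denominator, again matching the statement.

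There is no real obstacle here beyond careful sign bookkeeping. The only small thing to be slightly careful about is the convergence of the series on which Lemma~\ref{lem:mBessel} and Lemma~\ref{lem:Bessel} rest: both Bessel series have infinite radius of convergence, so evaluation at $2\sqrt{|\rho|}$ is unproblematic, and cancelling the nonvanishing factor $\Gamma(\sigma)/|\rho|^{(\sigma-1)/2}$ (nonvanishing thanks to Corollary~\ref{cor:spos}, which guarantees $\sigma>0$) from numerator and denominator is legitimate. With these remarks the proof is merely the explicit substitution just outlined.
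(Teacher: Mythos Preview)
Your proposal is correct and follows essentially the same route as the paper: substitute Lemmas~\ref{lem:mBessel} and~\ref{lem:Bessel} into~(\ref{eq:thetruth}), clear the common factor $\Gamma(\sigma)/|\rho|^{(\sigma-1)/2}$, use $\beta_1 F_d(\alpha)\sqrt{|\rho|}=1$ to collapse the remaining $\sqrt{|\rho|}$, and then reduce the signs $(-1)^{d-r}$ and $(-1)^{d-r+1}$ according to the parity of $d$. The paper organizes the algebra slightly differently (it multiplies numerator and denominator by $\rho^{(\sigma-1)/2}$ rather than factoring it out, and it leaves convergence and nonvanishing implicit), but the argument is the same.
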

\begin{proof}
We work out only the case of odd $d$ in detail, the case of even $d$ is
completely analogous. Direct substitution of Lemma~\ref{lem:mBessel} into
(\ref{eq:thetruth}) yields
$$
\xi(\alpha,\beta_0,\beta_1,d,r)
=
\frac{\di F_{r+1}(\alpha)
\frac{I_{\sigma-1}(2\sqrt{\rho})}
{\rho^{(\sigma-1)/2}}
+(-1)^{d-r}F_{d-r-1}(\alpha) F_d(\alpha)\beta_1
\frac{I_{\sigma}(2\sqrt{\rho})}{\rho^{(\sigma-2)/2}}}
{\di F_{r}(\alpha)
\frac{I_{\sigma-1}(2\sqrt{\rho})}
{\rho^{(\sigma-1)/2}}
+(-1)^{d-r+1}F_{d-r}(\alpha) F_d(\alpha)\beta_1
\frac{I_{\sigma}(2\sqrt{\rho})}{\rho^{(\sigma-2)/2}}}.
$$
After multiplying the numerator and the denominator by
$\rho^{(\sigma-1)/2}$ we get
$$
\xi(\alpha,\beta_0,\beta_1,d,r)=
\frac{\di F_{r+1}(\alpha)
I_{\sigma-1}(2\sqrt{\rho})
+(-1)^{d-r} \sqrt{\rho} F_{d-r-1}(\alpha) F_d(\alpha)\beta_1
I_{\sigma}(2\sqrt{\rho})}
{\di F_{r}(\alpha) I_{\sigma-1}(2\sqrt{\rho})
+(-1)^{d-r+1} \sqrt{\rho} F_{d-r}(\alpha) F_d(\alpha)\beta_1 
I_{\sigma}(2\sqrt{\rho})}.
$$
The first equality in Theorem~\ref{thm:hurwitzn} follows from
$\sqrt{\rho}=\beta_1^{-1} 
F_d(\alpha)^{-1}$ and from the fact that $(-1)^d=(-1)$ in this
case. A similar reasoning for even $d$ yields
$$
\xi(\alpha,\beta_0,\beta_1,d,r)
=
\frac{\di F_{r+1}(\alpha)
J_{\sigma-1}(2\sqrt{-\rho})
-(-1)^{d-r}F_{d-r-1}(\alpha) 
J_{\sigma}(2\sqrt{-\rho})}
{\di F_{r}(\alpha)
J_{\sigma-1}(2\sqrt{-\rho})
-(-1)^{d-r+1}F_{d-r}(\alpha) 
J_{\sigma}(2\sqrt{-\rho})},
$$
a final simplification may be made by observing that $(-1)^d=1$ in this case.
\end{proof}
\begin{remark}
\label{rem:Lehmersway}
Theorem~\ref{thm:hurwitzn} may also be derived from 
Lehmer's work~\cite{DHLehmer} directly, using a few, easily verifiable facts
about Fibonacci and Lucas polynomials. Again, we outline the proof for odd $d$
only, the case of even $d$ being completely analogous. Consider first the case
when $d=1$ and $r=0$. In this case we get $\sigma=\beta_0/\beta_1$,
regardless of of $\alpha$. Theorem~\ref{thm:hurwitzn} takes the form 
$\xi(\alpha,\beta_0,\beta_1,1,0)=
I_{\sigma-1}(2\sqrt{\rho})/I_{\sigma}(2\sqrt{\rho})$,  
which, by $\rho=\beta_1^{-2}$, is exactly Lehmer's formula
(\ref{eq:lehmer}). Consider next the case when $d$ is an arbitrary odd
integer and $r=0$. In this case, we may use Lehmer's~\cite[Theorem 4]{DHLehmer}
with all constants being equal to $\alpha$. Using the fact that 
$$
[\underbrace{\alpha,\ldots,\alpha}_d]=\frac{F_{d+1}(\alpha)}{F_d(\alpha)},
$$
the fractions $A/B$, $A'/B'$ and $A''/B''$ appearing in~\cite[Theorem
  4]{DHLehmer} are easily seen to correspond to
$((\beta_0-\alpha)F_d(\alpha)+F_{d+1}(\alpha))/F_d(\alpha)$,
$((\beta_0-\alpha)F_{d-1}(\alpha)+F_{d}(\alpha))/F_{d-1}(\alpha)$ and
$F_{d}(\alpha)/F_{d-1}(\alpha)$, respectively, in our
notation. Lehmer's $(bB+B'+B'')/(aB)$ corresponds to 
our 
$$
\frac{\beta_0 F_d(\alpha)+2F_{d-1}(\alpha)}{\beta_1F_d(\alpha)}
=
\frac{(\beta_0-\alpha)F_d(\alpha)+(\alpha F_d(\alpha)+
  2F_{d-1}(\alpha))}{\beta_1F_d(\alpha)} 
=
\sigma,
$$
since $\alpha F_d(\alpha)+
  2F_{d-1}(\alpha)=L_d(\alpha)$ holds for all $d$. 
Lehmer's~\cite[Theorem 4]{DHLehmer} gives 
$$
\xi(\alpha,\beta_0,\beta_1,d,0)
=\frac{1}{F_d(\alpha)}
\left(-F_{d-1}(\alpha)
+\frac{I_{\sigma-1}(2(\beta_1F_{d}(\alpha))^{-1})}
  {I_{\sigma}(2(\beta_1F_{d}F_{d}(\alpha))^{-1})}\right) 
$$
which is the same as the formula implied by Theorem~\ref{thm:hurwitzn},
after noting that we have $\rho=(\beta_1F_{d}(\alpha))^{-2}$. Finally,
for arbitrary $r$ we may substitute
$\eta:=\xi(\alpha,\beta_0,\beta_1,d,0)$ into the formula
$$
[\underbrace{\alpha,\ldots,\alpha}_r, \eta]=
\frac{F_{r+1}(\alpha)\eta+F_r(\alpha)} 
{F_{r}(\alpha)\eta+F_{r-1}(\alpha)} 
$$
and obtain
$$
\xi(\alpha,\beta_0,\beta_1,d,r)=
\frac{F_{r+1}(\alpha)\left(\frac{1}{F_d(\alpha)}
\left(-F_{d-1}(\alpha)
+\frac{I_{\sigma-1}(2(\beta_1F_{d}(\alpha))^{-1})}
  {I_{\sigma}(2(\beta_1F_{d}F_{d}(\alpha))^{-1})}\right)+F_r(\alpha)\right)}
{F_{r}(\alpha)\left(\frac{1}{F_d(\alpha)}
\left(-F_{d-1}(\alpha)
+\frac{I_{\sigma-1}(2(\beta_1F_{d}(\alpha))^{-1})}
  {I_{\sigma}(2(\beta_1F_{d}F_{d}(\alpha))^{-1})}\right)+F_{r-1}(\alpha)\right)}.
$$
(Recall that $\sigma$ does not depend on $r$.) Equivalently, 
$$
\xi=
\frac{F_{r+1}(\alpha)I_{\sigma-1}(2(\beta_1F_{d}(\alpha))^{-1})
+\left(F_r(\alpha)F_d(\alpha)-F_{r+1}(\alpha)F_{d-1}(\alpha)\right)I_{\sigma}(2(\beta_1F_{d}F_{d}(\alpha))^{-1})}
{F_{r}(\alpha) I_{\sigma-1}(2(\beta_1F_{d}(\alpha))^{-1})+
  \left(F_{r-1}(\alpha)F_d(\alpha)-F_{r}(\alpha)F_{d-1}(\alpha)\right)I_{\sigma}(2(\beta_1F_{d}F_{d}(\alpha))^{-1})}.
$$
Theorem~\ref{thm:hurwitzn} now follows after observing that
$F_r(\alpha)F_d(\alpha)-F_{r+1}(\alpha)F_{d-1}(\alpha)$ and and
$F_{r-1}(\alpha)F_d(\alpha)-F_{r}(\alpha)F_{d-1}(\alpha)$, respectively,
may be replaced by $(-1)^{r+1}F_{d-r-1}(\alpha)$  and $(-1)^r
F_{d-r}(\alpha)$  respectively.
\end{remark}

\section{Two useful lemmas}
\label{sec:lemma}

In this section we provide a combinatorial proof for two polynomial
identities which seem to be interesting by their own right. They will
play a crucial role in Section~\ref{sec:conv} where we prove
Theorem~\ref{thm:hurwitzc}. Our lemmas will be summation formulas for
the polynomials
\begin{equation}
\label{eq:rxy}
R_n(x,y):=\frac{1}{n!}\sum_{k=0}^n \binom{n}{k} \binom{n+y}{n-k} (n-k)!^2 x^k
\end{equation}
and
\begin{equation}
\label{eq:sxy}
S_n (x,y):=
\frac{1}{n!}\sum_{k=0}^{n-1} \binom{n}{k} \binom{n+y}{n-k-1}
(n-k)!(n-k-1)! x^{k+1} 
\end{equation}
\begin{lemma}
\label{lem:rsum}
The polynomials $R_n(x,y)$ satisfy 
$$
\sum_{m=0}^n \frac{(-x)^{n-m}}{(n-m)!} R_m(x,y)
=\frac{1}{n!}\sum_{k=0}^{\lfloor n/2\rfloor}
\binom{n}{k}\binom{n+y-k}{n-2k}(n-k)!^2 x^k 
\quad\mbox{for all $n\geq 0$}.
$$
\end{lemma}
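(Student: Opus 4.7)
The approach is to extract and match the coefficient of $x^\ell$ on both sides, for each $\ell \geq 0$. Substituting the definition of $R_m(x,y)$ into the left-hand side and collecting terms with total $x$-power equal to $\ell$ (so that, writing $j := n-m$, the inner index becomes $k = \ell - j$ and $j$ ranges over $\{0, 1, \ldots, \ell\}$), the factorials collapse via $\binom{n-j}{\ell-j} = (n-j)!/[(\ell-j)!(n-\ell)!]$ to give
$$
[x^\ell]\,\mathrm{LHS} = \frac{(n-\ell)!}{\ell!} \sum_{j=0}^{\ell} (-1)^{j} \binom{\ell}{j} \binom{n+y-j}{n-\ell}.
$$
The coefficient of $x^\ell$ on the right-hand side is $\frac{(n-\ell)!}{\ell!}\binom{n+y-\ell}{n-2\ell}$ when $\ell \leq \lfloor n/2 \rfloor$ and zero otherwise; however $\binom{n+y-\ell}{n-2\ell}$, viewed as a polynomial in $y$, vanishes identically when $n - 2\ell < 0$, so a single formula covers both cases.

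The lemma is thus reduced to the polynomial identity
$$
\sum_{j=0}^{\ell} (-1)^{j} \binom{\ell}{j} \binom{n+y-j}{n-\ell} = \binom{n+y-\ell}{n-2\ell}.
$$
In keeping with the combinatorial spirit promised for Section~\ref{sec:lemma}, I would prove this by inclusion-exclusion. For a nonnegative integer $y$, pick an auxiliary set $Y$ of $y$ elements disjoint from $[n]$ and count the $(n-\ell)$-subsets $X$ of $[n] \cup Y$ that contain the initial segment $[\ell]$. Directly, such an $X$ is determined by its $n-2\ell$ remaining elements, chosen from the $n+y-\ell$ available ones, giving the right-hand side. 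On the other hand, writing $\mathbf{1}_{[\ell]\subseteq X} = \prod_{i=1}^{\ell}\bigl(1 - \mathbf{1}_{i \notin X}\bigr)$, expanding, and summing over $X$, the term corresponding to a $j$-subset $S \subseteq [\ell]$ counts $(n-\ell)$-subsets of $([n] \cup Y)\setminus S$, yielding exactly $\sum_{j=0}^{\ell}(-1)^{j}\binom{\ell}{j}\binom{n+y-j}{n-\ell}$. Both sides of the reduced identity are polynomials in $y$ of bounded degree agreeing at every nonnegative integer, hence they agree identically.

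The main obstacle is purely bookkeeping: tracking the indices, signs, and the polynomial (rather than integer) interpretation of $y$ throughout, so that the reduction and the final identity remain valid in the generality in which the lemma is stated. No deeper ingredient than inclusion-exclusion is needed; the same $(-1)^{\ell-i}\binom{\ell}{i}$ skeleton will reappear in Lemma~\ref{lem:rsum}'s companion $S_n(x,y)$-identity, so this is a prototype for the second lemma of the section.
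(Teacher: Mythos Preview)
Your proof is correct. The coefficient extraction is clean, and the reduction to
\[
\sum_{j=0}^{\ell} (-1)^{j}\binom{\ell}{j}\binom{n+y-j}{n-\ell}=\binom{n+y-\ell}{n-2\ell}
\]
is exactly right; your inclusion--exclusion count of $(n-\ell)$-subsets of $[n]\cup Y$ containing $[\ell]$ proves it for nonnegative integer $y$, and the polynomial-agreement argument handles general $y$. The edge cases ($\ell>\lfloor n/2\rfloor$, where $\binom{n+y-\ell}{n-2\ell}$ vanishes as a polynomial in $y$) are also handled correctly.

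The paper proceeds quite differently. Instead of isolating powers of $x$, it builds a single global combinatorial model: for fixed nonnegative integer $y$, the left-hand side (times $n!$) is interpreted as a signed weighted sum over quadruplets $(\pi_1,\pi_2,\gamma_1,\gamma_2)$, where $\pi_1\in\Sym{[n]}$, $\pi_2\in\Sym{[n+y]}$, and the $\gamma_i$ are $\{0,1,2\}$-colorings subject to fixed-point and compatibility constraints. A sign-reversing involution (swapping color $1\leftrightarrow 2$ at a common fixed point) cancels contributions, leaving only quadruplets in which colors $1$ and $2$ never coincide; recounting those yields the right-hand side directly. Your route is shorter and reduces the lemma to a textbook binomial identity, at the cost of treating each $x$-degree separately; the paper's route keeps $x$ intact and gives a single involution that explains the alternating sign globally, which is more in the spirit of a bijective proof but requires building and verifying a considerably more elaborate model. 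Both arguments share the same outer frame---prove for $y\in\mathbb{N}$, extend as polynomials in $y$---and your observation that the same skeleton carries over to the $S_n$ identity mirrors exactly what the paper does in Lemma~\ref{lem:ssum}.
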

\begin{proof}
We consider both sides of the equation as polynomials in the variable
$y$ with coefficients from the field ${\mathbb Q}(x)$. Since a nonzero
polynomial has only finitely many roots, it suffices to show that the
two sides equal for any nonnegative integer value of $y$. 

For $y\in{\mathbb N}$, the left hand side may then be rewritten as 
$$
\frac{1}{n!}\sum_{m=0}^n \binom{n}{n-m} (-x)^{n-m} 
\sum_{k=0}^m \binom{m}{k} \binom{m+y}{y+k} (m-k)!^2 x^k.
$$
This is $1/n!$ times the total weight of all quadruplets 
$(\pi_1,\pi_2,\gamma_1,\gamma_2)$ subject to the
following conditions:
\begin{enumerate}
\item $\pi_1$ is a permutation of $\{1,\ldots,n\}$, $\pi_2$ is a
  permutation of $\{1,\ldots,n+y\}$, the functions $\gamma_1:
  \{1,\ldots,n\}\rightarrow \{0,1,2\}$ and  $\gamma_2:
  \{1,\ldots,n+y\}\rightarrow \{0,1,2\}$ are colorings;
\item\label{it:12} for $i\in\{1,2\}$ an element $j$ satisfying
  $\gamma_i(j)=1$ must be a fixed point of $\pi_i$; 
\item\label{it:c} for any $j$, $\gamma_1(j)=2$ is equivalent to
  $\gamma_2(j)=2$ and $\gamma_1(j)=\gamma_2(j)=2$ implies that $j$ is a
  common fixed point  of $\pi_1$ and $\pi_2$;
\item\label{it:exc} the colorings $\gamma_1$ and $\gamma_2$ satisfy $|\{j\in
  \{1,\ldots,n+y\}\::\: \gamma_2(j)>0\}|=|\{j\in \{1,\ldots,n\}\::\:
  \gamma_1(j)>0\}|+y$;   
\end{enumerate}
We define the weight of $(\pi_1,\pi_2,\gamma_1,\gamma_2)$ as
$$x^{|\{j\in \{1,\ldots,n\}\::\: \gamma_1(j)=1\}|}\cdot (-x)^{|\{j\in
  \{1,\ldots,n\}\::\: \gamma_1(j)=2\}|}.$$ 
Indeed, after setting $n-m$ as the number of elements $j$ satisfying
$\gamma_1(j)=\gamma_2(j)=2$, there are $\binom{n}{n-m}$ ways to select
them. By rule (\ref{it:c}) these are common fixed points of $\pi_1$
and $\pi_2$ (and thus elements of $\{1,\ldots,n\}$). Next we set $k$
as the number of  elements $j\in \{1,\ldots,n\}$ satisfying
$\gamma_1(j)=1$, and select these elements, in $\binom{m}{k}$ ways. 
The remaining elements of $\{1,\ldots,n\}$ satisfy $\gamma_1(j)=0$.
At this point we have $|\{j\in \{1,\ldots,n\}\::\:
  \gamma_1(j)>0\}|=n-m+k$. Thus, by rule (\ref{it:exc}), the set
$\{j\in \{1,\ldots,n+y\}\::\: \gamma_2(j)>0\}$ must have $n-m+k+y$
  elements. Exactly $n-m$ of these elements satisfy $\gamma_2(j)=2$,
  therefore the number of elements $j\in \{1,\ldots,n+y\}$ satisfying
$\gamma_2(j)=1$ must be $y+k$. There are $\binom{m+y}{y+k}$ ways to
  select them. So far we have selected $n-m+k$ fixed points of
  $\pi_1$ and $n-m+y+k$ fixed points of $\pi_2$. 
We can complete prescribing the action of $\pi_1$ and $\pi_2$ in
$(m-k)!^2$ ways. 

The same total weight may also be found by fixing the pair
$(\pi_1,\pi_2)$ first, and summing over all {\em allowable pairs}
of colorings $(\gamma_1,\gamma_2)$. We call the pair
$(\gamma_1,\gamma_2)$ allowable, if the quadruplet
$(\pi_1,\pi_2,\gamma_1,\gamma_2)$ satisfies the conditions listed
above. Suppose $j_0$ is a common fixed point of $\pi_1$ and $\pi_2$. 
Observe that the contribution of all allowable pairs $(\gamma_1,\gamma_2)$ 
satisfying $\gamma_1(j_0)=\gamma_2(j_0)=2$ cancels the contribution of all
allowable pairs satisfying $\gamma_1(j_0)=\gamma_2(j_0)=1$. Indeed, 
let $(\gamma_1,\gamma_2)$ an allowable pair satisfying
$\gamma_1(j_0)=\gamma_2(j_0)>0$. Then the pair $(\gamma_1',\gamma_2')$
defined by 
$$
\gamma_i'(j)=
\begin{cases}
\gamma_i(j) & \mbox{if $j\neq j_0$} \\
3-\gamma_i(j) & \mbox{if $j= j_0$} \\
\end{cases}
$$
is also allowable and also satisfies
$\gamma'_1(j_0)=\gamma_2'(j_0)>0$. (Here $j\in \{1,\ldots,n\}$ for
$\gamma_1$ and $\gamma_1'$ and $j\in \{1,\ldots,n+y\}$ for $\gamma_2$
and $\gamma_2'$.) The map $(\gamma_1,\gamma_2)\mapsto
(\gamma_1',\gamma_2')$ is an involution that matches canceling terms:
the only difference between the respective contribution is a factor of
$x$ or $-x$ associated to $j_0$. Therefore we may restrict our attention
to the total weight of quadruplets
$(\pi_1,\pi_2,\gamma_1,\gamma_2)$ satisfying the following
additional criteria:
\begin{itemize}
\item[(5)] no $j$ satisfies $\gamma_1(j)=\gamma_2(j)=2$, in particular, the
  colorings $\gamma_1$ and $\gamma_2$ map into the set $\{0,1\}$;
\item[(6)] no $j$ satisfies $\gamma_1(j)=\gamma_2(j)=1$, in other words, the
  sets $\{j\in \{1,\ldots,n\}\::\: \gamma_1(j)=1\}$ and $\{j\in
  \{1,\ldots,n+y\}\::\: \gamma_2(j)=1\}$ are disjoint.
\end{itemize}
When computing the total weight of such quadruplets, we may first select
$k$ as the number of elements $j\in \{1,\ldots,n\}$ satisfying
$\gamma_1(j)=1$ and select them in $\binom{n}{k}$ ways. By rule
(\ref{it:exc}) there must be $y+k$ elements $j\in\{1,\ldots,n+y\}$
satisfying $\gamma_2(j)=1$ and, by rule (6), this set is disjoint of the
previously selected $k$-element set. Thus there are $\binom{n+y-k}{y+k}$
ways to select them. So far we have selected $k$ fixed points of $\pi_1$
and $y+k$ fixed points of $\pi_2$. There are $(n-k)!^2$ ways to
complete prescribing $\pi_1$ and $\pi_2$, the weight of the
quadruplet is $x^k$. We obtain a total contribution of 
$\sum_{k=0}^n \binom{n}{k}\binom{n+y-k}{y+k}(n-k)!^2 x^k$ which is
exactly $n!$ times the right and side.
\end{proof}
\begin{lemma}
\label{lem:ssum}
The polynomials $S_n(x,y)$ satisfy 
$$
\sum_{m=0}^n \frac{(-x)^{n-m}}{(n-m)!} S_m(x,y)
=\frac{1}{n!}\sum_{k=0}^{\lfloor (n-1)/2 \rfloor }
\binom{n}{k}\binom{n+y-k}{n-2k-1}(n-k)!(n-k-1)! x^{k+1}
$$
for all $n\geq 0$.
\end{lemma}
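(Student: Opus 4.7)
The plan is to mimic the combinatorial proof of Lemma~\ref{lem:rsum} almost verbatim, modifying the underlying combinatorial model only slightly to account for the two features that distinguish the identity for $S_n(x,y)$ from the one for $R_n(x,y)$: the extra factor of $x$ on both sides, and the asymmetric pair of factorials $(m-k)!(m-k-1)!$ in place of $(m-k)!^2$. The trick is to enlarge the first permutation's domain by one compulsory element.

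Concretely, after again treating both sides as polynomials in $y$ and reducing to $y\in\mathbb{N}$, I will consider quadruplets $(\pi_1,\pi_2,\gamma_1,\gamma_2)$ where $\pi_1$ is now a permutation of $\{0,1,\ldots,n\}$, $\pi_2$ is a permutation of $\{1,\ldots,n+y\}$, and $\gamma_1:\{0,1,\ldots,n\}\to\{0,1,2\}$, $\gamma_2:\{1,\ldots,n+y\}\to\{0,1,2\}$ are colorings. The rules are those of the proof of Lemma~\ref{lem:rsum}, with the single modification that the coloring $\gamma_1$ is required to satisfy $\gamma_1(0)=1$ (forcing $\pi_1(0)=0$ by rule~(\ref{it:12})), and that in rule~(\ref{it:exc}) the cardinality $|\gamma_1>0|$ is now counted over $\{0,1,\ldots,n\}$, so $|\gamma_2>0|=|\gamma_1>0|+y$ still holds. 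The weight is unchanged: $x^{|\gamma_1=1|}(-x)^{|\gamma_1=2|}$. Decomposing such quadruplets by $m$ (the number of elements of $\{1,\ldots,n\}$ that are \emph{not} common fixed points of type $2$) and by $k=|\{j\in\{1,\ldots,n\}:\gamma_1(j)=1\}|$, one finds that $|\gamma_1=1|=k+1$ (including the element $0$), that $|\gamma_2=1|=y+k+1$ in the remaining $m+y$ elements of $\{1,\ldots,n+y\}$, and that $\pi_1$ and $\pi_2$ are free on $m-k$ and $m-k-1$ elements respectively. Multiplying the resulting binomial coefficients $\binom{n}{n-m}\binom{m}{k}\binom{m+y}{m-k-1}$ by $(m-k)!(m-k-1)!\,x^{k+1}(-x)^{n-m}$ and dividing by $n!$ recovers exactly $\frac{(-x)^{n-m}}{(n-m)!}S_m(x,y)$; summing over $m$ yields the LHS.

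Then I will apply the same sign-reversing involution as in Lemma~\ref{lem:rsum}: for any common fixed point $j_0\in\{1,\ldots,n\}$ of $\pi_1$ and $\pi_2$, toggle $\gamma_1(j_0)=\gamma_2(j_0)$ between the values $1$ and $2$. The compulsory element $0$ is untouched since $0\notin\{1,\ldots,n+y\}$, so the involution goes through verbatim. After cancellation one is left with quadruplets in which $\gamma_1,\gamma_2$ avoid the value $2$ on $\{1,\ldots,n\}$ and their type-$1$ supports are disjoint there. Parametrising the survivors by $k=|\gamma_1=1 \text{ on } \{1,\ldots,n\}|$, there are $\binom{n}{k}$ choices for the type-$1$ elements of $\pi_1$ in $\{1,\ldots,n\}$, then $y+k+1$ type-$1$ elements for $\pi_2$ chosen from the complementary $n+y-k$ elements in $\binom{n+y-k}{y+k+1}=\binom{n+y-k}{n-2k-1}$ ways, and $(n-k)!$ and $(n-k-1)!$ ways to complete $\pi_1$ and $\pi_2$, respectively, with weight $x^{k+1}$; the range $k\leq\lfloor(n-1)/2\rfloor$ drops out of the inequality $y+k+1\leq n+y-k$. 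Dividing by $n!$ gives the RHS exactly as stated. No step here is an obstacle in its own right; the only delicate point is to verify that the compulsory element $0$ cleanly produces both the extra factor of $x$ in the weight and the asymmetric completion factor $(m-k-1)!$ in the second permutation, which is precisely what the computation above confirms.
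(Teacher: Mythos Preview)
Your proof is correct and follows essentially the same route as the paper: the same quadruplet model $(\pi_1,\pi_2,\gamma_1,\gamma_2)$ and the same sign-reversing involution on common fixed points. The only cosmetic difference is the packaging of the ``extra $+1$'': the paper leaves $\pi_1$ on $\{1,\ldots,n\}$, factors out an explicit $x$, and replaces rule~(\ref{it:exc}) by $|\gamma_2>0|=|\gamma_1>0|+y+1$, whereas you obtain the same effect by adjoining a phantom element $0$ to the domain of $\pi_1$ with $\gamma_1(0)=1$ forced; either device produces the extra factor $x$ and the asymmetric completion count $(m-k-1)!$ for $\pi_2$.
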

\begin{proof}
A proof may be obtained by performing slight modifications to the proof
of Lemma~\ref{lem:rsum}, which we outline below. The left hand side equals
$$
\frac{x}{n!}\sum_{m=0}^n \binom{n}{n-m} (-x)^{n-m} 
\sum_{k=0}^m \binom{m}{k} \binom{m+y}{y+k+1} (m-k)!(m-k-1)! x^k,
$$
which may be considered as $x/n!$ times the total weight of quadruplets
$(\pi_1,\pi_2,\gamma_1,\gamma_2)$, where the only change to the
definition is that, instead of (4), now we require
\begin{itemize}
\item[(4')] the colorings $\gamma_1$ and $\gamma_2$ satisfy $|\{j\in
  \{1,\ldots,n+y\}\::\: \gamma_2(j)>0\}|=|\{j\in \{1,\ldots,n\}\::\:
  \gamma_1(j)>0\}|+y+1$.
\end{itemize}
Thus we will have to select $y+k+1$ elements $j$ (instead of $y+k$)  
satisfying $\pi_2(j)=1$, in $\binom{m+y}{y+k+1}$. In the last stage, we
will have selected $n-m+y+k+1$ fixed points of $\pi_2$, thus we will
only have $(m-k-1)!$ ways to complete the selection of $\pi_2$. 

The same involution as before shows that we may again restrict our
attention to those quadruplets which satisfy the additional conditions
(5) and (6). Let us set $k$ again as the number of elements $j\in
\{1,\ldots,n\}$ satisfying $\gamma_1(j)=1$. The only adjustment we need
to make to the reasoning is to observe that now we need to have $y+k+1$
elements satisfying $|\gamma_2(j)=1|$ which may be selected in
$\binom{n+y-k}{y+k+1}$ ways, instead of $\binom{n+y-k}{y+k}$
ways. Finally, we may complete the selection of $\pi_2$ in $(n-k-1)!$
ways, instead of $(n-k)!$. The resulting total weight is exactly $x/n!$
times the right hand side of our stated equality. 
\end{proof}
For the sake of use in Section~\ref{sec:conv} let us note that
$R_n(x,y)$ and $S_n(x,y)$ may also be written in the following shorter
form, using falling factorials:
\begin{equation}
\label{eq:rxys}
R_n(x,y)=\sum_{k=0}^n \frac{x^k (y+n)_{n-k}}{k!} 
\quad\mbox{and}\quad
S_n(x,y)=\sum_{k=0}^{n-1}\frac{x^{k+1}(y+n)_{n-k-1}}{k!} 
\end{equation}
A similar simplification yields that Lemma~\ref{lem:rsum} is equivalent to 
\begin{equation}
\label{eq:rsum}
\sum_{m=0}^n \frac{(-x)^{n-m}}{(n-m)!} R_m(x,y)
=\sum_{k=0}^{\lfloor n/2 \rfloor} \frac{(n-k)!}{k!}\binom{n+y-k}{n-2k} x^k
\quad\mbox{for all $n\geq 0$}, 
\end{equation}
and that Lemma~\ref{lem:ssum} has the compact form
\begin{equation}
\label{eq:ssum}
\sum_{m=0}^n \frac{(-x)^{n-m}}{(n-m)!} S_m(x,y)
=\sum_{k=0}^{\lfloor (n-1)/2 \rfloor}
\frac{(n-k-1)!}{k!}\binom{n+y-k}{n-2k-1}x^{k+1}
\quad\mbox{for all $n\geq 0$}.
\end{equation}

\section{Calculating the convergents}
\label{sec:conv}

In this section we calculate $(nd+r-1)$st convergent of
$\xi(\alpha,\beta_0,\beta_1,d,r)$ directly, from the Euler-Mindig
formulas (\ref{eq:EM}). We begin by observing that, for an arbitrary
continued fraction $[a_0,a_1,\ldots]$ and an arbitrary positive integer
$\alpha$,  the numerator $p_n$  in (\ref{eq:EM}) may be rewritten as   
$$
p_n=\sum_{S\subseteq_e \{0,\ldots,n\}} \prod_{i\in S}(a_i-\alpha+\alpha)
=
\sum_{S\subseteq_e \{0,\ldots,n\}} \sum_{T\subseteq S}
\prod_{i\in T}(a_i-\alpha) \alpha^{|S\setminus T|}.
$$ 
Changing the order of summation gives
\begin{equation}
\label{eq:ashift}
p_n=\sum_{T\subseteq \{0,\ldots,n\}} \prod_{i\in T}
(a_i-\alpha)\sum_{T\subseteq S\subseteq_e \{0,\ldots,n\}}\alpha^{|S|}.
\end{equation}
For $\xi(\alpha,\beta_0,\beta_1,d,r)$ we have $a_i=\alpha$ unless $i$ is
congruent to $r$ modulo $d$. Thus, to compute $p_{nd+r-1}$ using
(\ref{eq:ashift}), we only need to sum over subsets $T$ whose elements
are all congruent to $r$ modulo $d$. This observation yields the
following recurrence:
\begin{equation}
\label{eq:prec}
p_{nd+r-1}=F_{nd+r+1}(\alpha) + 
\sum_{k=0}^{n-1} p_{kd+r-1}\cdot (\beta_0+\beta_1\cdot k-\alpha)\cdot 
F_{(n-k)d}(\alpha).  
\end{equation}
By Eq.\ (\ref{eq:Fq}), the term $F_{nd+r+1}(\alpha)$ above is the
contribution of $T=\emptyset$, whereas the term $p_{kd+r-1}\cdot
(\beta_0+\beta_1\cdot k-\alpha)\cdot F_{(n-k)d+1}(\alpha)$ is the total
contribution of all sets $T$ whose largest element is $kd+r$. 
Substituting $n=0$ into (\ref{eq:prec}) yields the initial condition
$$
p_{r-1}=F_{r+1}(\alpha)
$$
which is obviously true for $r>0$, and it is also valid when $r=0$ after
setting $p_{-1}=1$, as usual. Consider the formal Laurent series 
$$
y(t):=\sum_{n=0}^{\infty} p_{nd+r-1}\cdot t^{\beta_0+\beta_1\cdot
  n-\alpha}\in {\mathbb Q}((t)). 
$$
For $y(t)$, Eq.\ (\ref{eq:prec}) yields
\begin{equation}
\label{eq:yt1}
y(t):=\sum_{n=0}^{\infty} F_{nd+r+1}(\alpha) t^{\beta_1 n}\cdot
t^{\beta_0-\alpha}+t\cdot \sum_{n=1}^{\infty} F_{nd}(\alpha)
t^{\beta_1 n} \cdot y'(t),  
\end{equation}
where $y'(t)$ is the formal derivative of $y(t)$ with respect to $t$.
To write (\ref{eq:yt1}) in a more explicit form, observe that, by
Eq.\ (\ref{eq:Fnqc}), we have 
$$
\sum_{n=0}^{\infty} F_{nd+r+1}(\alpha) t^n
=\sum_{n=0}^{\infty}
 \frac{\rho_1^{nd+r+1}-\rho_2^{nd+r+1}}{\sqrt{\alpha^2+4}} t^n   
=\frac{1}{\sqrt{\alpha^2+4}} 
  \left(\frac{\rho_1^{r+1}}{1-\rho_1^d t}
    -\frac{\rho_2^{r+1}}{1-\rho_2^d t}\right). 
$$ 
Using the fact that $\rho_1\rho_2=-1$, the above equation may be
rewritten as 
\begin{eqnarray*}
\sum_{n=0}^{\infty} F_{nd+r+1}(\alpha) t^n
&=&\frac{1}{\sqrt{\alpha^2+4}} 
  \frac{\rho_1^{r+1}-\rho_2^{r+1} -
    (\rho_1^{r+1}\rho_2^d-\rho_2^{r+1}\rho_1^d)t}{1 -
    (\rho_1^d+\rho_2^d)t+(-1)^d t^2}\\ 
&=&
\frac{1}{\sqrt{\alpha^2+4}} 
  \frac{\rho_1^{r+1}-\rho_2^{r+1}+(-1)^{r+1}
    (\rho_1^{d-r-1}-\rho_2^{d-r-1})t}{1 -
    (\rho_1^d+\rho_2^d)t+(-1)^d t^2}.\\ 
\end{eqnarray*} 
By Eqs.\ (\ref{eq:Fq}) and (\ref{eq:Lq}), we obtain
\begin{equation}
\label{eq:Fndr}
\sum_{n=0}^{\infty} F_{nd+r+1}(\alpha) t^n=
\frac{F_{r+1}(\alpha)+(-1)^{r+1}F_{d-r-1}(\alpha)t}
{1 - L_d(\alpha)t+(-1)^d t^2}. 
\end{equation}
Note that substituting $r=d-1$ in Eq.\ (\ref{eq:Fndr}) yields
$$
\sum_{n=1}^{\infty} F_{nd}(\alpha) t^{n}
= t\cdot \sum_{n=0}^{\infty} F_{nd+d}(\alpha) t^{n}
=\frac{F_{d}(\alpha)\cdot t}
{1 - L_d(\alpha) t+(-1)^d t^2},
$$
since $F_{0}(\alpha)=0$. Using these last two
equations, we may rewrite (\ref{eq:yt1}) as 
$$
y(t)=\frac{F_{r+1}(\alpha)+(-1)^{r+1}F_{d-r-1}(\alpha)t^{\beta_1}}
{1 - L_d(\alpha) t^{\beta_1}+(-1)^d t^{2\beta_1}}
\cdot
t^{\beta_0-\alpha}+ 
\frac{F_{d}(\alpha)\cdot t^{\beta_1+1}}
{1 - L_d(\alpha) t^{\beta_1}+(-1)^d t^{2\beta_1}} \cdot y'(t).
$$
Rearranging to express the derivative of $y(t)$ yields
\begin{equation}
\label{eq:ydiff}
\begin{array}{rcl}
y'(t)& = & \di \frac{y(t)\cdot (t^{-\beta_1-1} -
  L_d(\alpha) t^{-1}+(-1)^d 
t^{\beta_1-1})}{F_{d}(\alpha)}\\
&& \di  -\frac{(F_{r+1}(\alpha)+(-1)^{r+1}F_{d-r-1}(\alpha)t^{\beta_1})\cdot
t^{\beta_0-\alpha}}{F_{d}(\alpha)\cdot t^{\beta_1+1}}. 
\end{array}
\end{equation}
Inspired by the way we solve ordinary differential equations in
analysis, we will {\em guess} the solution of (\ref{eq:ydiff}) by
``solving'' first the corresponding homogeneous equation and then
replace the arbitrary constant by a formal Laurent series. The next few
lines will not make sense, they just indicate how one may come up
with a good guess for $y(t)$. A reader who does not like ``obscure
reasoning,'' should skip ahead to (\ref{eq:yz}) and accept that there is
no ``rational explanation'' as to why introducing the formal Laurent
series $z(t)$ is a good idea.

The homogeneous equation 
$$
y_H'(t)= \frac{y_H(t)\cdot (t^{-\beta_1-1} -
  L_d(\alpha) t^{-1}+(-1)^d 
t^{\beta_1-1})}{F_{d}(\alpha)}
$$
``may be rewritten as'' 
$$
\frac{d}{dt}\ln(y_H(t))=\frac{t^{-\beta_1-1} -
  L_d(\alpha) t^{-1}+(-1)^d 
t^{\beta_1-1}}{F_{d}(\alpha)},
$$
``yielding'' 
$$
y_H(t)=C\cdot t^{-L_d(\alpha)/F_{d}(\alpha)}
\exp\left(\frac{-t^{-\beta_1} + (-1)^d t^{\beta_1}}{\beta_1\cdot
  F_{d}(\alpha)}\right). 
$$
where $C$ is an arbitrary constant. This ``solution'' to the ``homogeneous
equation'' suggests looking for a solution to Eq.\ (\ref{eq:ydiff}) of the form
$$
y(t)=z(t)\cdot t^{-L_d(\alpha)/F_{d}(\alpha)}
\exp\left(\frac{-t^{-\beta_1} + (-1)^d t^{\beta_1}}{\beta_1\cdot
  F_{d}(\alpha)}\right).
$$
Equivalently, we would want to set 
$$
z(t):=y(t)\cdot t^{L_d(\alpha)/F_{d}(\alpha)}
\exp\left(\frac{t^{-\beta_1}+(-1)^{d-1} t^{\beta_1}}{\beta_1\cdot
  F_{d}(\alpha)}\right). 
$$
Alas, the resulting formal expression would contain arbitrary large
positive and as well as arbitrary small negative powers of $t$, it does
not resemble a Laurent series at all. Hoping that a slight change would
not upset our calculations irreparably, we {\em define} $z(t)$ by
setting 
\begin{equation}
\label{eq:yz}
z(t):=y(t)\cdot t^{L_d(\alpha)/F_{d}(\alpha)}
\exp\left(\frac{(-1)^{d-1} t^{\beta_1}}{\beta_1\cdot
  F_{d}(\alpha)}\right). 
\end{equation}
Note that $z(t)$ is a formal Laurent series in the variable 
$t^{1/F_{d}(\alpha)}$, an infinite formal sum of the form 
$$
z(t)=\sum_{n=0}^{\infty} s_n t^{\beta_1\cdot n+\beta_0-\alpha
  +L_d(\alpha)/F_{d}(\alpha)}\in {\mathbb Q}((t^{1/F_{d}(\alpha)})).
$$
Taking the derivative on both sides of (\ref{eq:yz}) yields
\begin{eqnarray*}
z'(t)&=&y'(t)\cdot t^{\frac{L_d(\alpha)}{F_{d}(\alpha)}}
\exp\left(\frac{(-1)^{d-1} t^{\beta_1}}{\beta_1\cdot
  F_{d}(\alpha)}\right)\\
&& + y(t) \cdot t^{\frac{L_d(\alpha)}{F_{d}(\alpha)}}
\exp\left(\frac{(-1)^{d-1} t^{\beta_1}}{\beta_1\cdot
  F_{d}(\alpha)}\right)\cdot
\left(\frac{L_d(\alpha) t^{-1}-(-1)^{d}
  t^{\beta_1-1}}{F_{d}(\alpha)} \right)\\ 
&=& t^{\frac{L_d(\alpha)}{F_{d}(\alpha)}}
\exp\left(\frac{(-1)^{d-1} t^{\beta_1}}{\beta_1\cdot
  F_{d}(\alpha)}\right)\cdot \left(y'(t)+y(t)\cdot
\frac{L_d(\alpha) t^{-1}-(-1)^{d}
  t^{\beta_1-1}}{F_{d}(\alpha)}\right).
\end{eqnarray*}
(Note that $L_d(\alpha)/F_{d}(\alpha)$ is always positive.)
After substituting the value of $y'(t)$ from (\ref{eq:ydiff}) and
simplifying, we obtain
$$
z'(t)=t^{\frac{L_d(\alpha)}{F_{d}(\alpha)}}
\exp\left(\frac{(-1)^{d-1} t^{\beta_1}}{\beta_1\cdot
  F_{d}(\alpha)}\right)\cdot 
\frac{y(t)-(F_{r+1}(\alpha)+(-1)^{r+1} F_{d-r-1}(\alpha)t^{\beta_1})t^{\beta_0-\alpha}}{F_{d}(\alpha)t^{\beta_1+1}}.   
$$
By (\ref{eq:yz}), the last equation is equivalent to 
\begin{eqnarray*}
z'(t)&=&\frac{z(t)}{F_{d}(\alpha)t^{\beta_1+1}}\\
&&- t^{\frac{L_d(\alpha)}{F_{d}(\alpha)}}
\exp\left(\frac{(-1)^{d-1} t^{\beta_1}}{\beta_1\cdot
  F_{d}(\alpha)}\right)\cdot 
\frac{(F_{r+1}(\alpha)+(-1)^{r+1}F_{d-r-1}(\alpha)t^{\beta_1})t^{\beta_0-\alpha}}{F_{d}(\alpha)t^{\beta_1+1}}.\\  
\end{eqnarray*}
Comparing the coefficients of $t^{\beta_1\cdot n+\beta_0-\alpha
  +L_d(\alpha)/F_{d}(\alpha)-1}$  on both sides yields
\begin{eqnarray*}
s_n\cdot\left(\beta_1\cdot n+\beta_0-\alpha +\frac{L_d(\alpha)}{F_{d}(\alpha)}\right)
&=&
\frac{s_{n+1}}{F_{d}(\alpha)}
-
\frac{F_{r+1}(\alpha)}{F_{d}(\alpha)}\cdot\frac{\left(\frac{(-1)^{d-1}
    }{\beta_1\cdot F_{d}(\alpha)}\right)^{n+1}}{(n+1)!}\\
&&+
\frac{(-1)^{r}F_{d-r-1}(\alpha)}{F_{d}(\alpha)}\cdot\frac{\left(\frac{(-1)^{d-1} 
    }{\beta_1\cdot F_{d}(\alpha)}\right)^{n}}{n!}. 
\end{eqnarray*}
Note that the factor $(\beta_1\cdot n+\beta_0-\alpha
+L_d(\alpha)/F_{d}(\alpha))$ in the last equation equals
$\beta_1\cdot(n+\sigma)$, where $\sigma$ is the magic sum, and that the
magic quotient $\rho$ appears twice on the right hand side.
Since, by Corollary~\ref{cor:spos},
$(\sigma+n)_{n+1}$ is not zero, we may divide both sides by 
$F_{d}(\alpha)^n\beta_1^{n+1}(\sigma+n)_{n+1}$, and obtain the 
following recurrence for $\widetilde{s}_n:=s_n/(F_{d}(\alpha)^{n}\beta_1^{n}(\sigma+n-1)_{n})$:
$$
\widetilde{s}_{n+1}
=
\widetilde{s}_{n}
+
F_{r+1}(\alpha)\cdot\frac{\rho^{n+1}}{(n+1)!(\sigma+n)_{n+1}}
-
\frac{(-1)^{r}F_{d-r-1}(\alpha)}{F_{d}(\alpha)\beta_1}\cdot\frac{\rho^{n}}{n!(\sigma+n)_{n+1}}. 
$$
Considering the fact that $\widetilde{s}_0=s_0=F_{r+1}(\alpha)$
and that
$(-1)^{r}F_{d-r-1}(\alpha)/(F_{d}(\alpha)\beta_1)
=
(-1)^{d-1-r}F_{d-r-1}(\alpha)F_{d}(\alpha)\beta_1\cdot\rho$,
the last recurrence implies
$$
\widetilde{s}_n
=\di
F_{r+1}(\alpha)\cdot
\sum_{k=0}^{n}
\frac{\rho^{k}}{k!(\sigma+k-1)_{k}}
+
(-1)^{d-r}F_{d-r-1}(\alpha)F_{d}(\alpha)\beta_1\cdot
\sum_{k=0}^{n-1}\frac{\rho^{k+1}}{k!(\sigma+k)_{k+1}}.
$$
Multiplying both sides by $(\sigma+n)_{n+1}$ yields
$$
\begin{array}{rcl}
\di \frac{s_{n}}{F_{d}(\alpha)^{n}\beta_1^{n}}
&=&\di
F_{r+1}(\alpha)\cdot
\sum_{k=0}^{n}
\frac{\rho^{k}(\sigma+n)_{n-k}}{k!}\\
&&\di +
(-1)^{d-r}F_{d-r-1}(\alpha)F_{d}(\alpha)\beta_1\cdot
\sum_{k=0}^{n-1}\frac{\rho^{k+1}(\sigma+n)_{n-k-1}}{k!}.\\
\end{array}
$$
By the formulas given in Eq.\ (\ref{eq:rxys}), we may rewrite the previous
equation in terms of the polynomials $R_n(x,y)$ and $S_n(x,y)$ as follows:
\begin{equation}
\label{eq:sRS}
\di \frac{s_n}{F_{d}(\alpha)^n\beta_1^n}
=
\di F_{r+1}(\alpha)
R_n\left(\rho,\sigma-1\right)
+(-1)^{d-r}F_{d-r-1}(\alpha)F_{d}(\alpha)\beta_1\cdot
S_n\left(\rho,\sigma-1\right).
\end{equation}
By (\ref{eq:yz}) we have 
$$
y(t)= z(t)\cdot t^{-L_d(\alpha)/F_{d}(\alpha)}
\exp\left(-\frac{(-1)^{d-1} t^{\beta_1}}{\beta_1\cdot
  F_{d}(\alpha)}\right). 
$$
Comparing the coefficients of $t^{\beta_0+\beta_1\cdot n-\alpha}$ yields
$$
p_{nd+r-1}
=\sum_{m=0}^n s_m\cdot 
    \frac{\left(\frac{(-1)^{d-1}}{F_{d}(\alpha)\beta_1}\right)^{n-m}}{(n-m)!},
$$
hence we have 
$$
\frac{p_{nd+r-1}}{F_{d}(\alpha)^n\beta_1^n}
=
 \sum_{m=0}^n
   \frac{s_m}{F_{d}(\alpha)^m\beta_1^m}\cdot 
   \frac{\left(\frac{(-1)^{d-1}}{F_{d}(\alpha)^2\beta_1^2}\right)^{n-m}}
   {(n-m)!}
=
 \sum_{m=0}^n
   \frac{s_m}{F_{d}(\alpha)^m\beta_1^m}\cdot 
   \frac{\rho^{n-m}}{(n-m)!}.
$$
Substituting Eq.\ (\ref{eq:sRS}) into this last equation and using
Eqs.\ (\ref{eq:rsum}) and (\ref{eq:ssum}) yields the formula for $p_{nd+r-1}$
in Theorem~\ref{thm:hurwitzc}.

%%%%%%%%%%%%%%%
For positive $r$, the formula for $q_{nd+r-1}$ stated
in Theorem~\ref{thm:hurwitzc} is an easy consequence of the formula for
$p_{nd+r-1}$. Indeed, by Lemma~\ref{lem:shift}, the denominator
$q_{nd+r-1}$ is the same as the numerator $p_{nd+r-2}$ 
associated to $\xi(\alpha,\beta_0,\beta_1,d,r-1)$, thus we only need to
replace $r$ by $r-1$ in the formula stated for $p_{nd+r-2}$.
It only remains to show the following lemma.
\begin{lemma}
\label{lem:r=0}
The equation stated for $q_{nd+r-1}$ in Theorem~\ref{thm:hurwitzc}
remains valid when we substitute $r=0$.
\end{lemma}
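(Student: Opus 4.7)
The plan is to reduce the $r=0$ case to the already-established formula for $p_{nd+r-1}$, using Lemma~\ref{lem:shift}. Shifting the sequence of partial denominators of $\xi(\alpha,\beta_0,\beta_1,d,0)$ by one position discards the initial $\beta_0$ and leaves
$$\underbrace{\alpha,\ldots,\alpha}_{d-1},\ \beta_0+\beta_1,\ \underbrace{\alpha,\ldots,\alpha}_{d-1},\ \beta_0+2\beta_1,\ \underbrace{\alpha,\ldots,\alpha}_{d-1},\ \beta_0+3\beta_1,\ldots,$$
which is exactly $\xi(\alpha,\beta_0+\beta_1,\beta_1,d,d-1)$. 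Hence, by Lemma~\ref{lem:shift}, the denominator $q_{nd-1}$ associated to $\xi(\alpha,\beta_0,\beta_1,d,0)$ coincides with the numerator $p_{nd-2}=p_{(n-1)d+(d-1)-1}$ associated to $\xi(\alpha,\beta_0+\beta_1,\beta_1,d,d-1)$.

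Next I apply the $p_{nd+r-1}$ formula from Theorem~\ref{thm:hurwitzc} to this shifted fraction with the substitutions $n\mapsto n-1$, $r\mapsto d-1$, $\beta_0\mapsto \beta_0+\beta_1$. The magic quotient $\rho$ is unchanged (it depends only on $\alpha,\beta_1,d$), while the magic sum becomes $\sigma'=\sigma+1$, so the shifted expression $(n-1)+\sigma'-1-k$ equals $n+\sigma-1-k$. The crucial observation is that the second sum of the formula is killed by the factor $F_{d-r-1}(\alpha)=F_0(\alpha)=0$, and the leading $F_{r+1}(\alpha)$ becomes $F_d(\alpha)$. This collapses the formula to
$$\frac{p_{nd-2}}{F_d(\alpha)^{n-1}\beta_1^{n-1}}\; =\; F_d(\alpha)\sum_{k=0}^{\lfloor(n-1)/2\rfloor}\frac{(n-1-k)!}{k!}\binom{n+\sigma-1-k}{n-1-2k}\rho^{k},$$
so after dividing by $F_d(\alpha)\beta_1$ we obtain
$$\frac{q_{nd-1}}{F_d(\alpha)^n\beta_1^n}\; =\; \frac{1}{\beta_1}\sum_{k=0}^{\lfloor(n-1)/2\rfloor}\frac{(n-1-k)!}{k!}\binom{n+\sigma-1-k}{n-1-2k}\rho^{k}.$$

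It remains to match this against the formula asserted in Theorem~\ref{thm:hurwitzc} at $r=0$. There, the factor $F_0(\alpha)=0$ kills the first sum, leaving only
$$(-1)^{d+1}F_d(\alpha)^2\beta_1\sum_{k=0}^{\lfloor(n-1)/2\rfloor}\frac{(n-k-1)!}{k!}\binom{n+\sigma-1-k}{n-2k-1}\rho^{k+1}.$$
Using the definition $\rho=(-1)^{d-1}/(\beta_1^2 F_d(\alpha)^2)$ one checks that $(-1)^{d+1}F_d(\alpha)^2\beta_1\cdot\rho=1/\beta_1$, hence peeling off a single factor of $\rho$ from $\rho^{k+1}$ reduces this expression to the one derived above (noting $n-2k-1=n-1-2k$ and $n-k-1=n-1-k$). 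The only point requiring any care is the bookkeeping around the substitutions $n\mapsto n-1$, $\sigma\mapsto\sigma+1$, $r\mapsto d-1$ in the $p$-formula, and the sign-and-magnitude collapse of the prefactor after extracting one power of $\rho$; both are purely mechanical verifications.
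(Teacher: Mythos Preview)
Your proof is correct and follows essentially the same strategy as the paper: relate $q_{nd-1}$ for $r=0$ to an already-established instance of Theorem~\ref{thm:hurwitzc} by shifting to the continued fraction with $\beta_0\mapsto\beta_0+\beta_1$, so that $\sigma\mapsto\sigma+1$ and a factor $F_0(\alpha)=0$ kills one of the two sums. The only cosmetic difference is that the paper observes $\xi(\alpha,\beta_0,\beta_1,d,0)=\beta_0-\alpha+\xi(\alpha,\beta_0+\beta_1,\beta_1,d,d)$ (same partial denominators from index~$1$ on, hence identical $q$'s) and then invokes the already-proven $q$-formula with $r=d$, whereas you invoke Lemma~\ref{lem:shift} and go straight to the $p$-formula with $r=d-1$; since the $q$-formula for positive $r$ was itself obtained from the $p$-formula via Lemma~\ref{lem:shift}, your route simply short-circuits that intermediate step.
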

\begin{proof}
Since $F_0(\alpha)=0$, substituting $r=0$ in Theorem~\ref{thm:hurwitzc} gives 
$$
\frac{q_{nd-1}}{F_{d}(\alpha)^n\beta_1^n}
=
(-1)^{d+1}F_{d}(\alpha)^2\cdot \beta_1 
\sum_{k=0}^{\lfloor (n-1)/2 \rfloor}
\frac{(n-k-1)!}{k!}\binom{n+\sigma-1-k}{n-2k-1}\rho^{k+1}.
$$
Here we may replace $(-1)^{d+1}F_{d}(\alpha)^2\cdot \beta_1$ by
$\rho^{-1}\beta_1^{-1}$. After rearranging, we obtain 
\begin{equation}
\label{eq:r=0}
q_{nd-1}
=
F_{d}(\alpha)^n\beta_1^{n-1}
\sum_{k=0}^{\lfloor (n-1)/2 \rfloor}
\frac{(n-k-1)!}{k!}\binom{n+\sigma-1-k}{n-2k-1}\rho^{k}.
\end{equation}
We need to show the validity of this equation. Observe that 
\begin{eqnarray*}
\xi(\alpha,\beta_0,\beta_1,d,0)
&=&
[\beta_0,\underbrace{\alpha,\ldots,\alpha}_{d-1},
  \beta_0+\beta_1,\ldots]
=\beta_0-\alpha+[\underbrace{\alpha,\ldots,\alpha}_{d},
  \beta_0+\beta_1,\ldots]\\
&=&\beta_0-\alpha+\xi(\alpha,\beta_0+\beta_1,\beta_1,d,d)
\end{eqnarray*}
Thus the denominator $q_{nd-1}$ associated to
$\xi(\alpha,\beta_0,\beta_1,d,0)$ is the same 
as the same as the denominator $q{(n-1)d-1}$ associated to 
$\xi(\alpha,\beta_0+\beta_1,\beta_1,d,d)$. We may apply the already shown part
of Theorem~\ref{thm:hurwitzc}. Since the magic quotient $\rho$ depends
only on $\alpha$, $\beta_1$ and $d$, it is the same for
$\xi(\alpha,\beta_0,\beta_1,d,0)$ and for 
$\xi(\alpha,\beta_0+\beta_1,\beta_1,d,d)$. For the magic sums we get
$$
\sigma(\alpha,\beta_0+\beta_1,\beta_1,d,d)
=
\frac{\beta_0+\beta_1-\alpha}{\beta_1}+\frac{L_d(\alpha)}{\beta_1
  F_{d}(\alpha)}
=\sigma(\alpha,\beta_0,\beta_1,d,0)+1.  
$$ 
Therefore we may obtain an equation for the $q_{nd-1}$ associated to
$\xi(\alpha,\beta_0,\beta_1,d,0)$ by replacing  $n$ with $n-1$, $r$ with
$d$ and $\sigma$ with $\sigma+1$ in the formula for $q_{nd-1}$ in
Theorem~\ref{thm:hurwitzc}. Since $F_0(\alpha)=0$, the second sum
vanishes and we get 
$$
\frac{q_{nd+r-1}(\alpha,\beta_0,\beta_1,d,0)}{F_{d}(\alpha)^{n-1}\beta_1^{n-1}}
=
\di F_{d}(\alpha)
\sum_{k=0}^{\lfloor (n-1)/2\rfloor}
\frac{(n-1-k)!}{k!}\binom{n+\sigma-1-k}{n-1-2k} \rho^k,
$$
which is obviously equivalent to (\ref{eq:r=0}).
\end{proof}

\section{Special cases leading to elementary expressions}
\label{sec:examples}

In this section we describe all instances of Theorem~\ref{thm:hurwitzn}
for which the magic sum $\sigma$ is the half of an odd integer,
forcing all (modified) Bessel functions in the statement to be known
elementary functions. There is not much to say about the case when the
quasi-period $d$ is given by $d=1$:
as seen in Remark~\ref{rem:Lehmersway}, we have
$\sigma=\beta_0/\beta_1$ thus $\sigma$ depends only on $\beta_0$ and
$\beta_1$ in a very simple fashion. 
In the case when $d\geq 2$ we give a similarly simple description.
\begin{theorem}
\label{thm:elementary}
If $d\geq 2$ then $\sigma$ is the half of an odd integer if and only
if one of the following conditions holds:
\begin{enumerate}
\item $d=3$, $\alpha=1$ and $(\beta_0+1)/\beta_1$ is the half of an odd
  integer;
\item $d=2$, $\alpha=1$ and $(\beta_0+2)/\beta_1$ is the half of an odd
  integer;
\item $d=2$, $\alpha=2$ and $(\beta_0+1)/\beta_1$ is the half of an odd
  integer;
\item $d=2$, $\alpha=4$ and $(2\beta_0+1)/\beta_1$ is an integer.
\end{enumerate}  
\end{theorem}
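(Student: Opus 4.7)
The plan is to reduce the condition ``$\sigma$ is the half of an odd integer'' to a simple divisibility statement about $F_d(\alpha)$, after which the theorem becomes a short enumeration.

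\emph{Simplifying $\sigma$.} First I would invoke the identity $L_d(\alpha)=\alpha F_d(\alpha)+2F_{d-1}(\alpha)$, already used in Remark~\ref{rem:Lehmersway}. Substituting it into Definition~\ref{def:magic}, the magic sum collapses to
$$\sigma \;=\; \frac{\beta_0}{\beta_1} + \frac{2F_{d-1}(\alpha)}{\beta_1\,F_d(\alpha)},\qquad\text{hence}\qquad 2\sigma \;=\; \frac{2\beta_0 F_d(\alpha)+4F_{d-1}(\alpha)}{\beta_1\,F_d(\alpha)}.$$
In this form the awkward ``$-\alpha$'' of the original definition has disappeared, which is why the four cases of the theorem will stratify cleanly according to the small value $F_d(\alpha)$.

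\emph{Forcing $F_d(\alpha)\in\{1,2,4\}$.} Iterating the common recurrence $F_n(\alpha)=\alpha F_{n-1}(\alpha)+F_{n-2}(\alpha)$ yields $\gcd(F_d(\alpha),F_{d-1}(\alpha))=\gcd(F_1(\alpha),F_0(\alpha))=1$. Hence if $2\sigma$ is any integer at all, then $F_d(\alpha)\mid 4F_{d-1}(\alpha)$, and by coprimality $F_d(\alpha)\mid 4$. Using $F_2(\alpha)=\alpha$, $F_3(\alpha)=\alpha^2+1$, $F_4(\alpha)=\alpha(\alpha^2+2)$, and the observation that $F_d(1)\ge 3$ for all $d\ge 4$ with strict monotonicity thereafter, a direct check produces the complete list $(d,\alpha)\in\{(2,1),(2,2),(2,4),(3,1)\}$.

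\emph{Reading off the four cases.} For each pair I would substitute the corresponding values of $F_d(\alpha)$ and $F_{d-1}(\alpha)$ into the formula for $2\sigma$. The pair $(2,1)$ gives $2\sigma=2(\beta_0+2)/\beta_1$, the pairs $(2,2)$ and $(3,1)$ both give $2\sigma=2(\beta_0+1)/\beta_1$, and $(2,4)$ gives $2\sigma=(2\beta_0+1)/\beta_1$. In the first three cases the numerator carries an explicit factor of $2$, so demanding that $2\sigma$ be an odd integer is by definition the demand that the relevant quotient $(\beta_0+c)/\beta_1$ be the half of an odd integer, producing conditions~(2), (3), and~(1) respectively. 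In the last case the numerator $2\beta_0+1$ is already odd, so integrality alone forces $2\sigma$ to be odd, yielding condition~(4). The main obstacle is essentially none beyond spotting the simplification in the first step; everything else is a short number-theoretic check driven by the rapid growth of $F_d(\alpha)$, using only identities already collected in the preliminaries.
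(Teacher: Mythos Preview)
Your argument is correct and considerably more efficient than the paper's. The key difference is your first move: after invoking $L_d(\alpha)=\alpha F_d(\alpha)+2F_{d-1}(\alpha)$ you use the coprimality $\gcd(F_d(\alpha),F_{d-1}(\alpha))=1$ to deduce $F_d(\alpha)\mid 4$ directly, which immediately pins down the four pairs $(d,\alpha)$. The paper instead works with the quantity $r_d(\alpha)=L_d(\alpha)-\alpha F_d(\alpha)$ and establishes, by separate inductions over $d$ for the ranges $\alpha\in\{2,3\}$ and $\alpha\ge 4$, the strict inequalities $2L_d(\alpha)\gtrless(2\alpha+1)F_d(\alpha)$ needed to rule out $r_d(\alpha)/F_d(\alpha)=1/2$; it then handles $\alpha=1$ with a further identity $L_d(1)-2F_d(1)=F_{d-3}(1)$ and yet another inequality. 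Your gcd observation replaces all of those case-splits and inductions with a single divisibility check against the growth of $F_d(\alpha)$. The only point worth stating more explicitly is the exclusion of $(d,\alpha)=(4,1)$, where $F_4(1)=3$ is at most $4$ but fails to divide $4$; your parenthetical about monotonicity covers $d\ge 5$ but not this case. Once the four pairs are isolated, both proofs finish the same way, and your remark that $2\beta_0+1$ being odd forces the quotient $(2\beta_0+1)/\beta_1$ to be odd whenever it is an integer cleanly handles the biconditional in case~(4).
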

\begin{proof}
First we show that even assuming that $\sigma$ is the half of an (even or odd)
integer implies that $d$ is at most $3$. The fact that $\sigma$ belongs
to $(1/2)\cdot {\mathbb Z}$ implies the same for $\beta_1\sigma
+\alpha-\beta_0=L_d(\alpha)/F_d(\alpha)$. We may subtract any
integer multiple of $F_d(\alpha)$ from $L_d(\alpha)$ and still have element of 
$(1/2)\cdot {\mathbb Z}$. Let us select $r_d(\alpha):=L_d(\alpha)-\alpha
F_d(\alpha)$ and consider the fraction $r_d(\alpha)/F_d(\alpha)\in
(1/2)\cdot {\mathbb Z}$. We claim that 
\begin{equation}
\label{eq:r<1}
0<\frac{r_d(\alpha)}{F_d(\alpha)}<1
\quad
\mbox{holds for $\alpha\geq 2$ and $d\geq 3$.}
\end{equation}
Indeed, for $d=3$, we have 
$r_3(\alpha)
=L_3(\alpha)-\alpha F_3(\alpha)=(\alpha^3+3\alpha)-\alpha(\alpha^2+1)
=2\alpha$
and $F_3(\alpha)=\alpha^2+1$. Both $r_3(\alpha)$ and $F_3(\alpha)$ are
positive and $r_3(\alpha)<F_3(\alpha)$ follows from 
$$
F_3(\alpha)-r_3(\alpha)=(\alpha-1)^2>0 \quad \mbox{for $\alpha\geq 2$.}
$$
For $d=4$ we have 
$
r_4(\alpha)=L_4(\alpha)-\alpha F_4(\alpha)
=\alpha^4+4\alpha^2+2-\alpha(\alpha^3+2\alpha)
=2\alpha^2+2
$ and $F_4(\alpha)=\alpha^3+2\alpha$. Both $r_4(\alpha)$ and $F_4(\alpha)$ are
positive and $r_4(\alpha)<F_4(\alpha)$ follows from 
$$
F_4(\alpha)-r_4(\alpha)
=\alpha^3-2\alpha^2+2\alpha-2
=\alpha^2(\alpha^1-1)+2(\alpha-1)
>0 \quad \mbox{for $\alpha\geq 2$.}
$$
For larger values of $d$, we may show that 
$$
0<r_d(\alpha)<F_d(\alpha) \quad\mbox{holds when $d\geq 3$ and $\alpha\geq 2$.}
$$
by induction on $d$, using the fact that the statement is valid for
$d\in \{3,4\}$ and that $r_d(\alpha)$ satisfies the same recurrence as
$F_d(\alpha)$, allowing to express the inequality for the next value of
$d$ as a positive combination of the inequalities for the current and
the previous values of $d$. This concludes the proof of (\ref{eq:r<1}).

As a consequence of the inequality (\ref{eq:r<1}), whenever $d\geq 3$ and
$\alpha\geq 2$, the only way for $r_d(\alpha)/F_d(\alpha)$ to be an
integer is to have $r_d(\alpha)/F_d(\alpha)=1/2$. In other words, in
this case, we must have $2r_d(\alpha)=F_d(\alpha)$ which is equivalent
to $2L_d(\alpha)=(2\alpha+1)F_d(\alpha)$. To show that $\alpha\geq 2$
and $d\geq 3$ can not hold simultaneously, it suffices to show that
$2L_d(\alpha)$ can never be equal to $(2\alpha+1)F_d(\alpha)$. To do so,
first we observe that 
$$
2L_d(\alpha) > (2\alpha+1)F_d(\alpha) \quad\mbox{holds for
  $\alpha\in\{2,3\}$ and $d\geq 3$.} 
$$
Indeed, for $\alpha=2$ we have $28=2L_3(2)>(2\cdot 2+1)F_3(2)=25$
and $68=2L_4(2)>(2\cdot 2+1)F_4(2)=60$ and we may prove the same
inequality for higher values of $d$ by induction. Similarly, for
$\alpha=3$ we have $72=2L_3(3)>(2\cdot 3+1)F_3(3)=70$
and $238=2L_4(3)>(2\cdot 3+1)F_4(3)=231$ and we may proceed again by
induction on $d$. Finally, to exclude $\alpha\geq 4$, we will show
$$
2L_d(\alpha) < (2\alpha+1)F_d(\alpha) \quad\mbox{holds for
  $\alpha\geq 4$ and $d\geq 3$.} 
$$
For $d=3$ we have 
$$
(2\alpha+1)F_3(\alpha)-2L_3(\alpha)
=
%%(2\alpha+1)(\alpha^2+1)-2(\alpha^3+3\alpha)
%%=
\alpha^2-4\alpha+1
=\alpha(\alpha-4)+1>0,
$$
and for $d=4$ we have 
$$
(2\alpha+1)F_4(\alpha)-2L_4(\alpha)
=
%%(2\alpha+1)(\alpha^3+2\alpha)-2(\alpha^4+4\alpha^2+2)
%%=
\alpha^3-4\alpha^2+2\alpha-4
=\alpha^2(\alpha-4)+2\alpha(\alpha-2)>0,
$$
and again we may proceed by induction on $d$.

We obtained that, for $d\geq 3$,  $\sigma\in (1/2)\cdot {\mathbb Z}$ is
only possible if $\alpha=1$. In that case, for $d\geq 4$ we have
$L_d(1)-2F_d(1)=F_{d-3}(1)$ (this may be shown by induction). Clearly
$L_d(1)/F_d(1)$ is the half of an integer, if and only if the same holds
for $F_{d-3}(1)/F_d(1)$. Now we may exclude the case $d\geq 4$
completely, after observing that
$$
0<\frac{F_{d-3}(1)}{F_d(1)}<\frac{1}{2}\quad\mbox{holds for $d\geq 4$}.
$$
Indeed, the above inequality is equivalent to $0<2F_{d-3}(1)<F_d(1)$
which may be easily shown by induction.

We have shown that $\sigma$ can only be the half of an integer if $d=2$
or $d=3$. In the case, when $d=3$, we have also shown that only
$\alpha=1$ is possible, and we get 
$$
\sigma
=\frac{\beta_0-1}{\beta_1}+\frac{L_3(1)}{\beta_1 F_{3}(1)} 
=\frac{\beta_0-1}{\beta_1}+\frac{4}{2\beta_1}
=\frac{\beta_0+1}{\beta_1}. 
$$ 
Consider finally the case when $d=2$. As before, $\sigma\in (1/2)
{\mathbb Z}$ implies that $L_d(2)/F_d(2)=(\alpha^2+2)/\alpha \in (1/2)
{\mathbb Z}$. This implies that $\alpha$ must be a divisor of $4$, that
is, an element of $\{1,2,4\}$. We have
$$
\sigma
=\frac{\beta_0-\alpha}{\beta_1}+\frac{L_2(\alpha)}{\beta_1 F_{2}(\alpha)} 
=\frac{\alpha\beta_0-\alpha^2}{\beta_1 \alpha}+\frac{\alpha^2+2}{\beta_1
  \alpha}  
=\frac{\alpha\beta_0+2}{\beta_1\alpha}.
$$
Therefore, for $\alpha=1$ we get
$\sigma=(\beta_0+2)/\beta_1$, for $\alpha=2$ we get
$\sigma=(\beta_0+1)/\beta_1$ and for $\alpha=4$ we get
$\sigma=(2\beta_0+1)/(2\beta_1)$.  Note that, in the case when
$\alpha=4$, $2\sigma=(2\beta_0+1)/\beta_1$ is necessarily odd, if it is
an integer.  
\end{proof}

A nice example of the case when $d=3$ and $\alpha=1$ in
Theorem~\ref{thm:elementary} above is the case when 
$d=3$, $\alpha=1$, $r=2$, $\beta_0=3m-1$, $\beta_1=2m$, 
for some $m>0$. In this case we get $\sigma=3/2$
and $\rho=1/(16m^2)$.
Theorem~\ref{thm:hurwitzn} gives 
$$
\xi(1,3m-1,2m,3,2)=
\frac{\di 2
I_{1/2}(1/(2m))}
{\di I_{1/2}(1/(2m))
+ I_{3/2}(1/(2m))}
$$
Using (\ref{eq:I1/2}) and  (\ref{eq:IJ3/2}) we may rewrite  the
preceding equation as 
$$
[\overline{1,1,3m-1+2mn}]_{n=0}^{\infty}=
\frac{2 \sinh (1/(2m))}{\cosh(1/(2m))-(2m-1)\sinh(1/(2m))}.
$$
For $m=1$, we obtain
$$[1,1,2,1,1,4,\ldots]=
\frac{e^{1/2}-e^{-1/2}}{e^{-1/2}}=e-1.
$$
A similarly nice example for the case when $d=2$ and $\alpha=1$ in
Theorem~\ref{thm:elementary} above is  
$$
\xi(1,3m-2,2m,2,1)
=
[\overline{1,3m-2+2mn}]_{n=0}^{\infty}
$$
for some $m>0$. In this example $\sigma=3/2$ and 
$\rho=(-1)/(4m^2)$ hold. Theorem~\ref{thm:hurwitzn} gives 
$$
\xi
=
\frac{J_{1/2}(2\sqrt{-\rho})}
{\di J_{1/2}(2\sqrt{-\rho})
- J_{3/2}(2\sqrt{-\rho})}
=
\frac{J_{1/2}(1/m)}
{\di J_{1/2}(1/m)
- J_{3/2}(1/m)}.
$$
Using (\ref{eq:J1/2}) and (\ref{eq:IJ3/2}) we may rewrite $\xi$ above as 
$$
\xi=\frac{\sin(1/m)}
{\di \cos(1/m) - (m-1)\sin(1/m)}.
$$ 
Substituting $m=1$ yields $\tan(1)=[1,1,1,3,1,5,1,7,\ldots]$.

We conclude this section with an example which is not likely to be
found in the literature, due to its ``sheer ugliness''. Let us set $\alpha=4$,
$\beta_0=7m+3$, $\beta_1=2m+1$, $d=2$ and $r=1$, where $m$ is any
nonnegative integer. For this example we
have $\sigma=7/2$ and $\rho=(-1)/16(2m+1)^2$, yielding
$2\sqrt{-\rho}=1/(4m+2)$. Using the fact that  
$$
J_{5/2}(z)=\sqrt{\frac{2}{\pi z}}
\left(\left(\frac{3}{z^2}-1\right)\sin(z)
-\frac{3}{z} \cos (z)\right)
$$
and
$$
J_{7/2}(z)=\sqrt{\frac{2}{\pi z}}
\left(\left(\frac{15}{z^3}-\frac{6}{z}\right)\sin(z)
-\left(\frac{15}{z^2}-1\right) \cos (z)\right)
$$
(see \cite[List of formul\ae: 46, 47]{McLachlan}),
Theorem~\ref{thm:hurwitzn} gives
$$
\xi=\frac{4 \left((12(2m+1)^2-1) \sin\left(\frac{1}{4m+2}\right)
-6(2m+1) \cos
\left(\frac{1}{4m+2}\right)\right)}
{(240m^2+228m+53) \cos
  \left(\frac{1}{4m+2}\right)-(960m^3+1392m^2+648m+97)
  \sin\left(\frac{1}{4m+2}\right)}. 
$$
The above formula was calculated from Theorem~\ref{thm:hurwitzn} with
the help of Maple. The same program was used to double-check its correctness
for selected values of $m$. For example, for $m=0$, we obtain
$$
\frac{4 \left(11 \sin\left(\frac{1}{2}\right)
-6\cos
\left(\frac{1}{2}\right)\right)}
{53 \cos
  \left(\frac{1}{2}\right)-97
  \sin\left(\frac{1}{2}\right)}=[4, 3, 4, 4, 4, 5, 4, 6, 4, 7, 4,\ldots ].
$$

\section{Special cases involving an integer magic sum}
\label{sec:examples2}

Another interesting special instance of our main result is the case
when the magic sum $\sigma$ is an integer. Specializing 
Theorem~\ref{thm:hurwitzn} to this case seems to yield less exciting
formulas, as (modified) Bessel functions of integer order are not known to
be elementary, even though there is a substantial literature on how to
compute them. On the other hand, the binomial coefficients appearing in
Theorem~\ref{thm:hurwitzc} are all ordinary (not generalized) binomial
coefficients. Motivated by this observation, we state the following
variant of Theorem~\ref{thm:elementary}. 

\begin{theorem}
\label{thm:integersig}
If $d\geq 2$ then $\sigma$ is an integer if and only
if one of the following conditions holds:
\begin{enumerate}
\item $d=3$, $\alpha=1$ and $(\beta_0+1)/\beta_1$ is an integer;
\item $d=2$, $\alpha=1$ and $(\beta_0+2)/\beta_1$ is an integer;
\item $d=2$, $\alpha=2$ and $(\beta_0+1)/\beta_1$ is an integer.
\end{enumerate}  
\end{theorem}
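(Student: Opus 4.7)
The plan is to refine the argument used in the proof of Theorem~\ref{thm:elementary}, exploiting the fact that $\sigma\in{\mathbb Z}$ is a strictly stronger condition than $\sigma\in (1/2)\cdot{\mathbb Z}$. Multiplying the definition of $\sigma$ by $\beta_1$ gives $\beta_1\sigma=(\beta_0-\alpha)+L_d(\alpha)/F_d(\alpha)$; since $\beta_0-\alpha\in{\mathbb Z}$, the condition $\sigma\in{\mathbb Z}$ forces $L_d(\alpha)/F_d(\alpha)\in{\mathbb Z}$. The first task is therefore to classify the pairs $(d,\alpha)$ with $d\geq 2$ for which this divisibility holds.

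For $d=2$ we have $L_2(\alpha)/F_2(\alpha)=\alpha+2/\alpha$, which is an integer exactly when $\alpha\mid 2$; this leaves $\alpha\in\{1,2\}$ and eliminates the case $\alpha=4$ that was allowed in Theorem~\ref{thm:elementary} (where $L_2(4)/F_2(4)=9/2$ is only a half-integer). For $d\geq 3$ with $\alpha\geq 2$, the proof of Theorem~\ref{thm:elementary} already establishes $0<r_d(\alpha)/F_d(\alpha)<1$ with $r_d(\alpha)=L_d(\alpha)-\alpha F_d(\alpha)$, so $L_d(\alpha)/F_d(\alpha)=\alpha+r_d(\alpha)/F_d(\alpha)$ lies strictly between two consecutive integers. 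For $d\geq 4$ with $\alpha=1$, the identity $L_d(1)-2F_d(1)=F_{d-3}(1)$ combined with the bound $0<F_{d-3}(1)/F_d(1)<1/2$ places $L_d(1)/F_d(1)$ strictly between $2$ and $5/2$, again not an integer. The only surviving pair with $d\geq 3$ is therefore $d=3,\ \alpha=1$, where $L_3(1)/F_3(1)=4/2=2$.

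To close the argument, in each of the three surviving cases I would compute $\sigma$ from Definition~\ref{def:magic}: substituting $L_2(1)/F_2(1)=3$, $L_2(2)/F_2(2)=3$, and $L_3(1)/F_3(1)=2$ into $\sigma=(\beta_0-\alpha+L_d(\alpha)/F_d(\alpha))/\beta_1$ yields $\sigma=(\beta_0+2)/\beta_1$ in case (2), $\sigma=(\beta_0+1)/\beta_1$ in case (3), and $\sigma=(\beta_0+1)/\beta_1$ in case (1). Each of these is an integer if and only if the quotient displayed in the corresponding case of the theorem is, which handles both directions of the equivalence.

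The main obstacle is minor: all of the relevant inequalities on $r_d(\alpha)/F_d(\alpha)$ and $F_{d-3}(1)/F_d(1)$ have already been established inside the proof of Theorem~\ref{thm:elementary}, so essentially no new analysis is needed. The one genuinely new point is the reduction to full integrality of $L_d(\alpha)/F_d(\alpha)$, which automatically excludes the extra case $d=2,\ \alpha=4$ that survived in the half-integer setting.
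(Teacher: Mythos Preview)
Your proposal is correct and follows essentially the same route as the paper: both arguments reduce to the bounds $0<r_d(\alpha)/F_d(\alpha)<1$ and $0<F_{d-3}(1)/F_d(1)<1/2$ already established in the proof of Theorem~\ref{thm:elementary}, and then finish by evaluating $\sigma$ in the surviving cases. The one cosmetic difference is that the paper first invokes the half-integer classification and then separately rules out $d=2,\ \alpha=4$ by observing that $2\sigma=(2\beta_0+1)/\beta_1$ is necessarily odd there, whereas you bypass this by noting at once that $\sigma\in{\mathbb Z}$ forces $L_d(\alpha)/F_d(\alpha)\in{\mathbb Z}$, which for $d=2$ reads $\alpha\mid 2$; your route is slightly more direct, but the content is the same.
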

\begin{proof}
If $\sigma$ is an integer than it is also half of an integer, and 
we have shown in the proof of Theorem~\ref{thm:elementary} that this
implies $d=2$ or $d=3$. Furthermore, in the case when $d=3$, only
$\alpha=1$ is possible. We omit the completely analogous analysis of the
resulting finitely many cases. We only highlight the
fact that, as noted at the end of the proof of
Theorem~\ref{thm:elementary}, in the case when $d=2$ and $\alpha=4$, the
number $2\sigma$ can not be an even integer, thus $\sigma$ can not
be an integer.
\end{proof}

A nice example of the case when $d=3$ and $\alpha=1$ in
Theorem~\ref{thm:integersig} above is the case when 
$d=3$, $\alpha=1$,  $r=2$, $\beta_0=m-1$ and $\beta_1=m$ for some
$m>1$. In this case we get $\sigma=1$ and
$\rho=(2m)^{-2}$. 
Introducing
\begin{equation}
\label{eq:Pdef}
P_n(x)=\sum_{k=0}^{\lfloor (n-1)/2 \rfloor}
\frac{(n-k-1)!}{k!}\binom{n-k}{n-2k-1}x^{k+1},\quad\mbox{and}\quad
\end{equation}
\begin{equation}
\label{eq:Qdef}
Q_n(x)=\sum_{k=0}^{\lfloor n/2\rfloor}
\frac{(n-k)!}{k!}\binom{n-k}{n-2k} x^{k},
\end{equation}
Theorem~\ref{thm:hurwitzc} may be written in the form
\begin{equation}
\label{eq:pconv}
p_{3n+1}=2(2m)^n Q_n\left(\frac{1}{4m^2}\right) \quad\mbox{and}\quad
\end{equation} 
\begin{equation}
\label{eq:qconv}
q_{3n+1}=(2m)^n\left(2m P_n\left(\frac{1}{4m^2}\right)
+Q_n\left(\frac{1}{4m^2}\right)\right). 
\end{equation} 
The coefficients of the polynomials $P_n(x)$ and $Q_n(x)$, respectively, are
listed as sequences A221913 and A084950 in \cite{OEIS}. The quotient
$P_n(x)/Q_n(x)$ is the generalized continued continued fraction of the form
(\ref{eq:gcf}) where $a_i=i$ for all $0\leq i\leq n$ and $b_j=x$ for all
$1\leq j\leq n$. Entry A084950 states the formula 
\begin{equation}
\label{eq:wlang}
\lim_{n\rightarrow \infty} \frac{P_n(x)}{Q_n(x)}
=\sqrt{x}\cdot \frac{I_1(2\sqrt{x})}{I_0(2\sqrt{x})}
\end{equation}
with a proof outlined by Wolfdieter Lang. Eqs.\ (\ref{eq:pconv}) and
(\ref{eq:qconv}), combined with Theorem~\ref{thm:hurwitzn}, yield the
equation 
$$
\lim_{n\rightarrow \infty} \frac{2Q_n\left(\frac{1}{4m^2}\right)}{2m
  P_n\left(\frac{1}{4m^2}\right) 
+Q_n\left(\frac{1}{4m^2}\right)}=\frac{2I_0\left(\frac{1}{m}\right)}{I_0\left(\frac{1}{m}\right)+I_1\left(\frac{1}{m}\right)}.   
$$ 
After dividing the numerator and the denominator of the fraction on the
left hand side by $Q_n(x)$ we may easily deduce from the last equation 
that 
$$
\lim_{n\rightarrow \infty} \frac{P_n\left(\frac{1}{4m^2}\right)}{ 
Q_n\left(\frac{1}{4m^2}\right)}=\frac{1}{2m}\frac{I_1\left(\frac{1}{m}\right)}{I_0\left(\frac{1}{m}\right)},
\quad\mbox{holds for $m>1$.}   
$$ 
This is a special instance of (\ref{eq:wlang}) above for real numbers 
  $x$ of the form $x=1/(4m^2)$, where $m>1$ is an integer. It seems
possible that, with some effort, the definition of the continued fraction
$[1,1,m-1,1,1,2m-1,1,1,3m-1,\ldots]$ may be generalized to and shown
convergent for an arbitrary positive real $m$ and then, after extending
the validity of our main results to this setting, we may obtain another
(certainly more complicated) proof of (\ref{eq:wlang}). Departing on
this journey for the sake of this single formula is certainly not worth
the effort. On the other hand, it seems worth looking at in the future,
why each third convergent of the continued fraction
$[1,1,m-1,1,1,2m-1,1,1,3m-1,\ldots]$ can be matched up with a convergent
of the generalized continued fraction  $x/(1+x/(2+x/3+\cdots$ for
$x=1/(4m^2)$ and how far this ``coincidence'' could be generalized.

\section*{Acknowledgments}
This work was partially supported by a grant from the Simons Foundation
(\#245153 to G\'abor Hetyei). The author is indebted to an anonymous
referee of a $12$-page extended abstract on the present work for
many helpful comments.

\end{document}